\theoremstyle{thmstyleone}%
\newtheorem{theorem}{Theorem}
\newtheorem{proposition}[theorem]{Proposition}%
\theoremstyle{thmstyletwo}%
\theoremstyle{thmstylethree}%
\newtheorem{definition}{Definition}%
\begin{document}

\title[five-dimensional naturally graded associative algebras]{The relationship between local derivations and local automorphisms of some associative algebras}


\author*[1,2]{\fnm{Farkhodzhon} \sur{Arzikulov}}\email{arzikulovfn@gmail.com}

\author[2]{\fnm{Utkir} \sur{Khakimov}}\email{khakimov$_{-}$u@inbox.ru}
\equalcont{These authors contributed equally to this work.}

\author[3]{\fnm{Abduqaxxor} \sur{Qurbonov}}\email{qabduqaxxor@mail.ru}
\equalcont{These authors contributed equally to this work.}

\affil*[1]{\orgdiv{V.I. Romanovskiy Institute of Mathematics}, \orgname{Uzbekistan Academy of Sciences}, \orgaddress{\street{Olmazor district, University street 9}, \city{Tashkent}, \postcode{100174}, \country{Uzbekistan}}}

\affil[2]{\orgdiv{Mathematics}, \orgname{Andijan State University}, \orgaddress{\street{University 129}, \city{Andijan}, \postcode{170100}, \country{Uzbekistan}}}

\affil[3]{\orgdiv{Mathematics}, \orgname{Namangan State University}, \orgaddress{\street{Boburshoh street 161}, \city{Namangan}, \postcode{160107}, \country{Uzbekistan}}}


\abstract{In the present paper local derivations and local automorphisms of five-dimensional naturally graded nilpotent associative algebras are studied. Namely, a general form of the matrices of local derivations and local automorphisms of algebras $\pi_2$ and $\pi_3$ is clarified. It turns out that the general form of the matrix of an automorphism (derivation) on these algebras does not coincide with the local automorphism's (resp. local derivation's) matrix's general form on these algebras. Therefore, these associative algebras have local automorphisms (resp. local derivations) that are not automorphisms (resp. derivations). We also
establish a relationship between local automorphisms and local derivations via an exponential expression.
We prove that the sets of local derivations of algebras $\pi_2$ and $\pi_3$ form Lie algebras with respect to the Lie brackets. Thus,
we show that the Lie algebra problem from the Ayupov-Eldique-Kudaybergenov problems for local derivations of the algebras under consideration has a positive solution. The remaining problems from the Ayupov-Eldique-Kudaybergenov problems also have a positive solution for algebras $\pi_2$ and $\pi_3$.

}

\keywords{associative algebra, automorphism, local automorphism, nilpotent algebra, naturally graded algebra}


\pacs[MSC Classification]{16W25, 16W20}

\maketitle

\section{Introduction}

	The concept of local derivations was introduced by R. Kadison in his paper \cite{8}. In this paper R. Kadison proved that each continuous local derivation from a von Neumann algebra into its dual Banach bemodule is a derivation. In \cite{7} B.Jonson  extended the above result by proving that every local derivation from a $C^*$-algebra into its Banach bimodule is a derivation. Based on these results, many authors have studied local derivations on operator algebras.
	Present paper is mainly devoted to the study of local automorphisms on associative algebras. The following results were obtained by the authors on local automorphisms: in \cite{3} Sh.A.Ayupov, K.K.Kudaybergenov, B.A.Omirov considered automorphisms of simple Leibniz algebras. They established that every 2-local automorphism on a complex finite-dimensional simple Leibniz algebra is an automorphism and that nilpotent Leibniz algebras admit 2-local automorphisms which are not automorphisms. A similar problem concerning local automorphism on simple Leibniz algebras is reduced to the case of simple Lie algebras.

Also, local automorphisms of certain finite-dimensional simple Lie and Leibniz algebras are investigated in \cite{2}. T.Becker, J.Escobar, C.Salas, and R.Turdibaev in \cite{5} proved that the set of local automorphisms $LAut(sl_2)$ coincides with the group $Aut^{\mp}sl_2$ of all automorphisms and anti-automorphisms. Later in \cite{6} M.Costantini proved that a linear map on a simple Lie algebra is a local automorphism if and only if it is either an automorphism or an anti-automorphism. Similar results concerning 2-local automorphisms on Lie superalgebras were obtained in \cite{9}. F.N.Arzikulov, I.A.Karimjanov, and S.M.Umrzaqov established that every local and 2-local automorphisms on the solvable Leibniz algebras with null-filiform and naturally graded non-Lie filiform nilradicals, whose dimension of complementary space is maximal, is an automorphism \cite{1}. Recently, local automorphisms of Cayley algebras were considered in \cite{AEK}.

In the present paper automorphisms, derivations, local automorphisms and local derivations of five-dimensional naturally graded associative algebras $\pi_2$ and $\pi_3$ with characteristic sequence $C(A)=(3,2)$, defined in section \ref{sec1}, are studied. Namely, general forms of the matrices of local derivations and local automorphisms of these algebras are clarified. It turns out that the general form of the matrix of a local automorphism (local derivation) on these algebras does not coincide with the automorphism's (resp. derivation's) matrix's general form. Therefore, these associative algebras have local automorphisms (local derivations) that are not automorphisms (derivations).

Other problem under consideration in the present paper is conjectured in \cite{AEK} by Ayupov, Eldique and Kudaybergenov.
They have proved that the group $LocAut(\mathcal{C})$ of all local automorphisms of a Cayley algebra $\mathcal{C}$ is a Lie group and
its Lie algebra $Lie(LocAut(\mathcal{C}))$ is isomorphic to the Lie algebra $LocDer(\mathcal{C})$ of all local derivations of
$\mathcal{C}$, i.e., $Lie(LocAut(\mathcal{C}))\cong LocDer(\mathcal{C})$.
These results leaded them formulate the following problems:

{\bf Problems 1.}
Let $\mathcal{A}$ be a finite dimensional (not necessary associative) algebra over a field ${\Bbb F}$.

(1) Is the group $LocAut(\mathcal{A})$ of all local automorphisms of $\mathcal{A}$ a Lie group?

(2) Is the vector space $LocDer(\mathcal{A})$ of all local derivations of
$\mathcal{A}$ a Lie algebra with respect to the brackets $[\,\, , \,\, ]$?

(3) If the above two assertions are true,
are the Lie algebras $Lie(LocAut(\mathcal{A}))$ and $LocDer(\mathcal{A})$ isomorphic?

It is well known that the space of all derivations $Der(\mathcal{A})$ is a Lie algebra with respect to the Lie bracket. At the same time, it is not clear whether the space of all local derivations $LocDer(\mathcal{A})$ forms a Lie algebra. As in \cite{AEK} the authors have noted for Cayley algebras,
Problems 1
have a positive solution.

In the present paper we show that in the cases of the algebras under consideration
Problems 1 also have a positive solution.

As is known every derivation $D$ of a finite-dimensional algebra $\mathcal{A}$ defines an automorphism
$\phi$ via an exponential expression, i.e., $\phi=I+\sum_{n=1}^\infty \frac{D^n}{n!}$.
We establish a similar relationship between local automorphisms and local derivations of algebras $\pi_2$ and $\pi_3$
via an exponential expression.

\section{Preliminaries}  \label{sec1}

For an algebra $\mathcal{A}$ of an arbitrary variety, we consider the series
\[
\mathcal{A}^1,~~~\mathcal{A}^{i+1}=\sum^{i}_{k=1}\mathcal{A}^k\mathcal{A}^{i-k+1},~~~~i\geq 1.
\]
We say that an algebra $\mathcal{A}$ is \emph{nilpotent} if $\mathcal{A}^i=0$ for some $i\in\mathbb{N}.$ The smallest integer satisfying $\mathcal{A}^i=0$ is called the index of nilpotency or nilindex of $\mathcal{A}.$

An algebra $\mathcal{A}$ is called $N$-graded if there is a decomposition $\mathcal{A}=\bigoplus_{i\in \mathbb{N}}\mathcal{A}_i,\mathcal{A}_i\mathcal{A}_j\subseteq \mathcal{A}_{i+j}$, for all $i,j\in \mathbb{N}$. Given an associative algebra $\mathcal{A}$, one can consider its associated graded associative algebra $gr\mathcal{A}=\bigoplus_{i\in \mathbb{N}} (\mathcal{A}_i/\mathcal{A}_{i+1})$ with the product
$(x+\mathcal{A}_{i+1})(y+\mathcal{A}_{j+1})=xy+\mathcal{A}_{i+j+1},x\in \mathcal{A}_i,y\in \mathcal{A}_j.$

\begin{definition}
Given a nilpotent associative algebra $\mathcal{A}$, we have a natural grading on $\mathcal{A}$ induced by the series $\mathcal{A}_i, \mathcal{A}_i=\mathcal{A}_i/\mathcal{A}_{i+1}, 1\leq i\leq k-1,$ and $gr\mathcal{A}=\mathcal{A}_1\bigoplus \mathcal{A}_2\bigoplus...\bigoplus \mathcal{A}_k.$ If $gr\mathcal{A}$ and $\mathcal{A}$ are isomorphic, denoted by $gr\mathcal{A}\cong \mathcal{A},$ we say that the algebra $\mathcal{A}$ is naturally graded. For any element $x$ in $\mathcal{A}$, we define the left multiplication operator as: $L_x:\mathcal{A}\rightarrow \mathcal{A},z\rightarrow {xz},z\in \mathcal{A}.$ For an element $x\in \mathcal{A}\setminus \mathcal{A}_2,$ define the decreasing sequence $C(x)={(n_1,n_2,...,n_k)}$ which consists of the dimensions of the Jordan blocks of the left multiplication operator $L_x.$ Endow the set of these sequences with the lexicographic order, ${i.e.,} {C(x)}={(n_1,n_2,...,n_k)}\leq C(y)={(m_1,m_2,...,m_s)},$ which means that there is an index $i\in \mathbb{N},$ such that $n_j=m_j$ for all $j< m_i.$
\end{definition}

\begin{definition}
The sequence $C(\mathcal{A})={max}_{x\in \mathcal{A}\setminus {\mathcal{A}_2}}C(x)$ is defined to be the characteristic sequence of the algebra $\mathcal{A}$.
\end{definition}

We assume throughout this section that ${\Bbb F}$ is the field of complex numbers. Let $\mathcal{A}$ be a naturally graded $n$-dimensional quasi-filiform associative algebra. Then, there are two possibilities for the characteristic sequence, either $C(\mathcal{A})={(n - 2,1,1)}$ or  ${C(\mathcal{A})=(n-2,2)}.$

The associative algebras
\begin{equation}\label{eq}\nonumber
\pi_2:\left\{%
\begin{array}{llll}
    {e_1e_1=e_2}, \\
    {e_1e_2=e_2e_1=e_3}, \\
    {e_1e_4=e_4e_1=e_5}, \\
    {e_4e_4=e_5}
\end{array}%
\right., \,\,\,
\pi_3:\left\{%
\begin{array}{llll}
    {e_1e_1=e_2}, \\
    {e_1e_2=e_2e_1=e_3}, \\
    {e_1e_4=e_5}, \\
    {e_4e_4=e_5}
\end{array}%
\right.
\end{equation}
are five-dimensional naturally graded associative algebras with characteristic sequence $C(\mathcal{A})=(3,2)$ {(see Theorem 4.5 in \cite{KL})}.

Now we give a description of automorphisms of five-dimensional  naturally graded associative algebras $\pi_2$ and $\pi_3$ with characteristic sequence $C(\mathcal{A})=(3,2)$.

\begin{definition}
Let $\mathcal{A}$ be an algebra. A bijective linear map $\Phi:A\rightarrow A$ is called an automorphism, if, for any elements $x,y\in \mathcal{A},~\Phi(xy)=\Phi(x)\Phi(y).$
\end{definition}

In what follows we will need the following theorems.

\begin{theorem} \label{2.4} \cite{1.1}
A linear operator on associative algebra $\pi_2$ is an automorphism if and only if the matrix of this linear operator has the following matrix form
\begin{equation}  \label{(1)}
\left(
  \begin{array}{ccccc}
    a_{1,1} & 0 & 0 & 0 & 0 \\
    a_{2,1} & a_{1,1}^2 & 0 & 0 & 0 \\
    a_{3,1} & 2a_{1,1}a_{2,1} & a_{1,1}^3 & a_{3,4} & 0 \\
    a_{4,1} & 0 & 0 &   a_{1,1}+a_{4,1} & 0 \\
    a_{5,1} & 2a_{1,1}a_{4,1}+a_{4,1}^2 & 0 & a_{5,4} & (a_{1,1}+a_{4,1})^2 \\
  \end{array}
\right)
\end{equation}
\end{theorem}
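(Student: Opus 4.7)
My plan is to verify both directions of the equivalence. For the forward direction, assume $\Phi$ is an automorphism with $\Phi(e_j) = \sum_i a_{i,j}\,e_i$. The preparatory step is to record the full multiplication table of $\pi_2$: by the $N$-grading and nilpotency of index $4$, every product of basis vectors not appearing in the defining relations vanishes, and in particular $\pi_2$ is commutative.

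Columns $2$, $3$, and $5$ of the matrix then come out cheaply. A direct expansion of $\Phi(e_1)^2$ using the table produces the second column (only $e_1^2$, $e_1e_4$, $e_4e_1$, $e_4^2$ contribute). Multiplying once more, $\Phi(e_3)=\Phi(e_1)\Phi(e_2)$ collapses to $a_{1,1}^3 e_3$, giving the third column. Once the fourth column is known, $\Phi(e_5) = \Phi(e_4)^2$ yields the fifth.

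The main obstacle is the fourth column. Write $\Phi(e_4) = \sum_i A_i e_i$ and equate $\Phi(e_4)^2 = \Phi(e_1)\Phi(e_4)$ component by component; the $e_2$-equation gives $A_1(A_1 - a_{1,1}) = 0$, which admits the spurious root $A_1 = a_{1,1}$ in addition to the desired $A_1 = 0$. I eliminate it by invoking the extra defining relation $e_4 e_2 = 0$: the product $\Phi(e_4)\Phi(e_2)$ telescopes to $a_{1,1}^{2} A_1\, e_3$ (only $e_1 \cdot e_2$ survives among the basis products), so $a_{1,1}^{2} A_1 = 0$. Bijectivity of $\Phi$ forces $a_{1,1} \neq 0$ (otherwise $\Phi(e_3) = \Phi(e_1)^3 = 0$), hence $A_1 = 0$. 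The remaining two equations then give $A_2 = 0$ and $A_4 \in \{0,\, a_{1,1} + a_{4,1}\}$; bijectivity again rules out $A_4 = 0$, since otherwise $\Phi(e_4) \in \pi_2^2 = \langle e_2, e_3, e_5\rangle$.

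For the converse, I verify that any matrix of form (1) with $a_{1,1} \neq 0$ and $a_{1,1} + a_{4,1} \neq 0$ (the conditions ensuring invertibility) satisfies $\Phi(e_i)\Phi(e_j) = \Phi(e_ie_j)$ for every basis pair. Most of the twenty-five relations reduce to $0 = 0$ because so many products in $\pi_2$ vanish; the handful of nontrivial cases ($e_1^2$, $e_1 e_2$, $e_2 e_1$, $e_1 e_4$, $e_4 e_1$, $e_4^2$) repeat the calculations already carried out in the forward direction. The genuine subtlety of the whole argument remains the elimination of the spurious root in column four, which cannot be extracted from the three principal defining products alone and truly needs the auxiliary vanishing $e_4 e_2 = 0$.
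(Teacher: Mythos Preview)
The paper does not actually prove this statement: Theorem~\ref{2.4} is quoted from the external reference~\cite{1.1} with no argument given, so there is no in-paper proof to compare against. Your proposal therefore has to be judged on its own merits.

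Your argument is correct. The forward direction is handled cleanly: the computations of columns $2$, $3$, $5$ are routine, and the crux --- pinning down column~$4$ --- is exactly where the difficulty lies. You are right that equating $\Phi(e_4)^2 = \Phi(e_1)\Phi(e_4)$ alone leaves the spurious possibility $A_1 = a_{1,1}$, and that the relation $\Phi(e_4)\Phi(e_2)=0$ (coming from $e_4e_2=0$ in the multiplication table) is what kills it, since that product collapses to $a_{1,1}^2 A_1\,e_3$. The exclusion of $A_4=0$ via bijectivity is also sound (indeed, if $A_4=0$ then $\Phi(e_5)=\Phi(e_4)^2=0$ directly). For the converse, your plan is the standard one; the determinant of the matrix in form~(\ref{(1)}) is $a_{1,1}^{6}(a_{1,1}+a_{4,1})^{3}$, so the invertibility conditions you name are exactly right, and the remaining multiplicativity checks are the same expansions already done in the forward direction. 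One cosmetic point: you might state explicitly at the outset that the unlisted basis products vanish by the natural grading (you allude to this, but making it a lemma avoids any ambiguity when you invoke $e_4e_2=0$, $e_1e_5=0$, etc.).
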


\begin{theorem} \label{2.5}  \cite{1.1}
A linear operator on associative algebra $\pi_3$ is an automorphism if and only if the matrix of this linear operator has the following matrix form
\begin{equation}   \label{(2)}
\left(
  \begin{array}{ccccc}
    a_{1,1} & 0 & 0 & 0 & 0 \\
    a_{2,1} & a_{1,1}^2 & 0 & 0 & 0 \\
    a_{3,1} & 2a_{1,1}a_{2,1} & a_{1,1}^3 & a_{3,4} & 0 \\
    0 & 0 & 0 & a_{1,1} & 0 \\
    a_{5,1} & 0 & 0 & a_{5,4} & a_{1,1}^2 \\
  \end{array}
\right).
\end{equation}
\end{theorem}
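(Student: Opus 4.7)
The plan is to prove both implications, with the substance concentrated in the necessity direction. Throughout, write $\Phi(e_j) = \sum_{i=1}^{5} a_{i,j} e_i$. The strategy is first to use automorphism-invariance of the canonical subspaces of $\pi_3$ to zero out most entries, and then to apply multiplicativity on the defining products to determine the survivors.

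For necessity, a short calculation in $\pi_3$ yields $\mathcal{A}^2 = \langle e_2, e_3, e_5 \rangle$, $\mathcal{A}^3 = \langle e_3 \rangle$, and $\mathrm{Ann}(\mathcal{A}) = \langle e_3, e_5 \rangle$. Since each of these is preserved by $\Phi$, the zero entries in columns $2$, $3$, and $5$ of (2) are immediate, and the induced isomorphism on $\mathcal{A}/\mathcal{A}^2 \cong \langle \bar e_1, \bar e_4 \rangle$ forces the $2 \times 2$ block in rows and columns $\{1,4\}$ to be nonsingular. Next, expanding $\Phi(e_4)^2 = \Phi(e_5)$ and reading off the $e_2$-coefficient gives $a_{1,4}^2 = 0$, so $a_{1,4} = 0$; the nonsingularity of the $2\times 2$ block then forces $a_{1,1} \neq 0$ and $a_{4,4} \neq 0$. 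The decisive identity is $\Phi(e_4)\Phi(e_1) = \Phi(e_4 e_1) = 0$, which for $\pi_3$ (where $e_4 e_1 = 0$) collapses to $a_{4,4} a_{4,1} e_5 = 0$ and hence yields $a_{4,1} = 0$. This is the one step that does not go through for $\pi_2$, where $e_4 e_1 = e_5$ leaves $a_{4,1}$ free, and it is precisely what makes matrix form (2) more rigid than (1). The remaining expansions $\Phi(e_1)^2 = \Phi(e_2)$, $\Phi(e_1)\Phi(e_2) = \Phi(e_3)$, $\Phi(e_1)\Phi(e_4) = \Phi(e_5)$, and $\Phi(e_4)^2 = \Phi(e_5)$ express the surviving diagonal-type entries $a_{2,2}, a_{3,2}, a_{3,3}, a_{4,4}, a_{5,5}$ as the polynomials in $a_{1,1}, a_{2,1}$ recorded in (2), and simultaneously force $a_{2,4} = a_{3,5} = a_{5,2} = 0$; the five entries $a_{2,1}, a_{3,1}, a_{5,1}, a_{3,4}, a_{5,4}$ remain free.

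For sufficiency, I would verify that any matrix of form (2) with $a_{1,1} \neq 0$ is invertible (its determinant equals $a_{1,1}^9$) and that the induced linear map satisfies $\Phi(e_i e_j) = \Phi(e_i)\Phi(e_j)$ on every basis pair. Because $\Phi(e_3) \in \langle e_3 \rangle$ and $\Phi(e_5) \in \langle e_5 \rangle$ both lie in $\mathrm{Ann}(\mathcal{A})$, all products $e_i e_j$ with $i$ or $j \in \{3,5\}$ vanish on both sides; only the five nonzero defining products and the zero product $e_4 e_1 = 0$ require a genuine check, and each reduces to a straightforward polynomial identity in the entries of (2).

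The main obstacle is organizational book-keeping in the bilinear expansions $\Phi(e_i)\Phi(e_j)$ rather than anything conceptual; the single conceptually novel point is extracting $a_{4,1} = 0$ from $\Phi(e_4 e_1) = 0$, which is the precise feature of $\pi_3$ responsible for the extra zeros in (2) compared with the form (1) for $\pi_2$.
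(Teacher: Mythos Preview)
The paper does not give its own proof of this theorem; it is quoted with a citation to reference~[1.1] and used as a known input. Your argument is correct and self-contained, and is presumably close in spirit to what the cited source does: exploit invariance of $\mathcal{A}^2$, $\mathcal{A}^3$, and $\mathrm{Ann}(\mathcal{A})$ to kill most off-entries, then read off the remaining constraints from the defining products.

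One small computational slip worth fixing: at the point where you compute $\Phi(e_4)\Phi(e_1)$, the entry $a_{2,4}$ has not yet been eliminated, so the product is actually
\[
\Phi(e_4)\Phi(e_1)=a_{2,4}a_{1,1}\,e_3+a_{4,4}a_{4,1}\,e_5,
\]
not just $a_{4,4}a_{4,1}e_5$. This does not damage the argument---on the contrary, setting both coefficients to zero (using $a_{1,1}\neq 0$ and $a_{4,4}\neq 0$) yields $a_{2,4}=0$ and $a_{4,1}=0$ simultaneously, so your later appeal to the other relations for $a_{2,4}=0$ becomes redundant. With that adjustment, the necessity direction is complete, and your sufficiency sketch (determinant $a_{1,1}^9\neq 0$, plus checking the five nonzero products and $e_4e_1=0$) is exactly right.
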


\section{Local automorphisms of five-dimensional  naturally graded
associative algebra with characteristic sequence $C(A)=(3,2)$}

Now we give a description of local automorphisms of associative algebras $\pi_2$ and $\pi_3$.

\begin{definition}
Let A be an algebra. A linear map $\Phi:A\rightarrow A$ is called a local automorphism, if for any element $\nu\in A$ there exists an automorphism $\varphi_\nu:A\rightarrow A$ depending on $\nu$ such that $\Phi(\nu)=\varphi_\nu(\nu) $.
\end{definition}

\begin{theorem} \label{3.2}
A linear operator on associative algebra  $\pi_2$ is a local automorphism if and only if the matrix of this linear operator has the following matrix form
\begin{equation}  \label{pi_22}
\left(
  \begin{array}{ccccc}
    b_{11} & 0 & 0 & 0 & 0 \\
    b_{21} & b_{22} & 0 & 0 & 0 \\
    b_{31} & b_{32} & b_{33} & b_{34} & 0 \\
    b_{41} & 0 & 0 &   b_{41}+b_{11} & 0 \\
    b_{51} & b_{52} & 0 & b_{54} & b_{22}+b_{52} \\
  \end{array}
\right),
\end{equation}
where $b_{11}b_{22}b_{33}(b_{41}+b_{11})(b_{22}+b_{52})\neq 0$.
\end{theorem}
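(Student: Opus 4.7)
The plan is to prove the two implications separately. For necessity, I would proceed column by column: applying the defining identity $\Phi(e_i)=\varphi_{e_i}(e_i)$ for $i=1,\dots,5$ and comparing with the $i$-th column of an automorphism matrix of the form (\ref{(1)}) immediately yields the preliminary zero pattern
\[
\begin{pmatrix}
b_{11} & 0 & 0 & 0 & 0\\
b_{21} & b_{22} & 0 & 0 & 0\\
b_{31} & b_{32} & b_{33} & b_{34} & 0\\
b_{41} & 0 & 0 & b_{44} & 0\\
b_{51} & b_{52} & 0 & b_{54} & b_{55}
\end{pmatrix}.
\]
The non-vanishing conditions $b_{11}, b_{22}, b_{33}, b_{11}+b_{41}, b_{22}+b_{52}\neq 0$ would follow from the standing invertibility constraints $a_{1,1}\neq 0$ and $a_{1,1}+a_{4,1}\neq 0$ of every automorphism in (\ref{(1)}); for instance $b_{11}$ is the $(1,1)$-entry of the automorphism that realises $\Phi(e_1)$, while $b_{22}+b_{52}=b_{55}$ is the $(5,5)$-entry of the automorphism that realises $\Phi(e_5)$.

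The two nontrivial identities $b_{44}=b_{11}+b_{41}$ and $b_{55}=b_{22}+b_{52}$ should then emerge from testing the local-automorphism property at two carefully chosen vectors. For $\nu=e_4-e_1$, the fourth coordinate of $\varphi_\nu(e_4-e_1)$ collapses to $a_{1,1}+a_{4,1}-a_{4,1}=a_{1,1}$; matching it with $b_{44}-b_{41}$, together with the first coordinate $-a_{1,1}=-b_{11}$, forces $b_{44}=b_{11}+b_{41}$. For $\nu=e_5-e_2$, the fifth coordinate of $\varphi_\nu(e_5-e_2)$ simplifies to $(a_{1,1}+a_{4,1})^2-a_{4,1}(2a_{1,1}+a_{4,1})=a_{1,1}^2$, which, compared with the second coordinate $-a_{1,1}^2=-b_{22}$, yields $b_{55}=b_{22}+b_{52}$.

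For sufficiency, given $\Phi$ with matrix (\ref{pi_22}) satisfying the non-vanishing hypothesis, I would construct for each $\nu=\sum_i\alpha_ie_i$ an automorphism $\varphi_\nu$ of the form (\ref{(1)}) with $\varphi_\nu(\nu)=\Phi(\nu)$. Since (\ref{(1)}) has seven free parameters while coordinate matching produces only five equations, one expects a two-parameter family of solutions. In the generic branch $\alpha_1\neq 0$ and $\alpha_1+\alpha_4\neq 0$, I would set $a_{1,1}=b_{11}$ and $a_{4,1}=b_{41}$; the identity $b_{44}=b_{11}+b_{41}$ then makes the fourth-coordinate equation automatic, and the remaining equations form an easily solved linear system in $a_{2,1}, a_{3,1}, a_{3,4}, a_{5,1}, a_{5,4}$. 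The main obstacle is the degenerate branches $\alpha_1=0$ or $\alpha_1+\alpha_4=0$: there one must instead choose $a_{1,1}$ as a square root of $b_{22}$ (available because $b_{22}\neq 0$), or $a_{1,1}+a_{4,1}$ as a square root of $b_{22}+b_{52}$ (available because $b_{22}+b_{52}\neq 0$), using that the base field is $\mathbb{C}$. The identity $b_{55}=b_{22}+b_{52}$ reappears here as exactly the compatibility condition that makes these choices consistent with the fifth-coordinate equation, and a short subcase analysis for vectors supported only in the higher graded pieces completes the argument.
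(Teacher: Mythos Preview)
Your proposal is correct and follows essentially the same route as the paper. The paper obtains the preliminary zero pattern by evaluating at the $e_i$, then derives $b_{44}=b_{11}+b_{41}$ from the critical case $\nu_1=-\nu_4$ and $b_{55}=b_{22}+b_{52}$ from the case $\nu_2=-\nu_5$ with $\nu_1=\nu_4=0$, which are exactly your test vectors $e_4-e_1$ and $e_5-e_2$; the sufficiency direction is likewise handled by the same branch-by-branch analysis, choosing $a_{1,1}=b_{11}$ when $\nu_1\neq 0$ and $a_{1,1}=\sqrt{b_{22}}$ (or $a_{1,1}+a_{4,1}=\sqrt{b_{22}+b_{52}}$) in the degenerate branches.
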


\begin{proof}
Let $\nabla$ be a local automorphism on $\pi_2$, let $B=(b_{ij} )^5_{ij=1} $ be the matrix, defining the linear operator $\nabla$. By the definition, for any element  $\nu\in\pi_2$, there exists an automorphism $L_\nu$ such that $\nabla(\nu)=L_\nu(\nu)$. Then, for the appropriate matrix $A_\nu$ of the operator $L_\nu$, we have

\begin{equation}  \label{(4)}
\overline{L_\nu(\nu)}=B\bar{\nu}=A_\nu\bar{\nu}, \ \ \ A_\nu=(a^\nu_{i,j} )^5_{i,j=1}. ~~~
\end{equation}

By the form (\ref{(1)}) of the matrix of an automorphism on $\pi_2$, using equalities $\nabla(e_i)=L_{e_i}(e_i)=B\bar{e_i},$ $i=1,2,3,4,5,$ we get
$$b_{1,1}=a_{1,1}^{e_1}\neq 0,~ b_{2,1}=a_{2,1}^{e_1},~ b_{3,1}=a_{3,1}^{e_1},~ b_{4,1}=a_{4,1}^{e_1},~ b_{5,1}=a_{5,1}^{e_1},$$
 $$b_{1,2}=0,~ b_{2,2}=(a_{1,1}^{e_2})^2\neq 0,~ b_{3,2}=2a_{1,1}^{e_2}a_{2,1}^{e_2},~ b_{4,2}=0,~ b_{5,2}=2a_{1,1}^{e_2}a_{4,1}^{e_2}+(a_{4,1}^{e_2})^2,$$
 $$b_{1,3}=0,~ b_{2,3}=0,~ b_{3,3}=(a_{1,1}^{e_3})^3,~ b_{4,3}=0,~ b_{5,3}=0,$$
 $$b_{1,4}=0,~ b_{2,4}=0,~ b_{3,4}=a_{3,4}^{e_4},~ b_{4,4}=a_{1,1}^{e_4}+a_{4,4}^{e_4}\neq 0,~ b_{5,4}=a_{5,4}^{e_4},$$
 $$b_{1,5}=0,~ b_{2,5}=0,~ b_{3,5}=0,~ b_{4,5}=0,~ b_{5,5}=(a_{1,1}^{e_5}+a_{4,4}^{e_5})^2\neq 0$$.
 Hence,
 \[
B=\left(
  \begin{array}{ccccc}
    b_{11} & 0 & 0 & 0 & 0 \\
    b_{21} & b_{22}^2 & 0 & 0 & 0 \\
    b_{31} & b_{32} & b_{33}^3 & b_{34} & 0 \\
    b_{41} & 0 & 0 &   b_{44} & 0 \\
    b_{51} & b_{52} & 0 & b_{54} & b_{55}^2 \\
  \end{array}
\right),
\]
where $b_{11}b_{22}b_{33}b_{44}b_{55}\neq 0$.
Since the components of the matrix $B$ are complex numbers we can rewrite $B$ in the following form:
\[
B=\left(
  \begin{array}{ccccc}
    b_{11} & 0 & 0 & 0 & 0 \\
    b_{21} & b_{22} & 0 & 0 & 0 \\
    b_{31} & b_{32} & b_{33} & b_{34} & 0 \\
    b_{41} & 0 & 0 &   b_{44} & 0 \\
    b_{51} & b_{52} & 0 & b_{54} & b_{55} \\
  \end{array}
\right),
\]
where $b_{11}b_{22}b_{33}b_{44}b_{55}\neq 0$.

Thus, according to (\ref{(4)}) and Theorem \ref{2.4}, we obtain the following system of linear equations
\begin{small}
\begin{equation}\label{(5)}
\left\{%
\begin{array}{lllll}
    {\nu_1\alpha_{11}^\nu=b_{11}\nu_1} \\
    {\nu_1\alpha_{21}^\nu+\nu_2(\alpha_{11}^\nu)^2=\nu_1b_{21}+\nu_2b_{22}} \\
    {\nu_1\alpha_{31}^\nu+2\nu_2\alpha_{11}^\nu\alpha_{21}^\nu+\nu_3(\alpha_{11}^\nu)^3+\nu_4\alpha_{34}^\nu=\nu_1b_{31}+\nu_2b_{32}+\nu_3b_{33}+\nu_4b_{34}} \\
    {\nu_1\alpha_{41}^\nu+\nu_4(\alpha_{11}^\nu+\alpha_{41}^\nu)=\nu_1b_{41}+\nu_4b_{44}} \\
    {\nu_1\alpha_{51}^\nu+\nu_2(2\alpha_{11}^\nu\alpha_{41}^\nu+(\alpha_{41}^\nu)^2)+\nu_4\alpha_{54}^\nu+\nu_5(\alpha_{11}^\nu+\alpha_{41}^\nu)^2=
    \nu_1b_{51}+\nu_2b_{52}+\nu_4b_{54}+\nu_5b_{55}}
\end{array}%
\right.
\end{equation}
\end{small}

If, for each element $ x\in\pi_2 $, there exists a matrix $A_x$ of the form (\ref{(1)}) such that
$$
B\bar{x}=A_x\bar{x},
$$
then the linear operator, defined by the matrix B is a local automorphism. In other words, if, for each element $x\in\pi_2$, the system of linear equations
(\ref{(5)}) has a solution with respect to the variables
$$
a_{11}^\nu, a_{21}^\nu, a_{24}^\nu, a_{31}^\nu, a_{34}^\nu, a_{41}^\nu, a_{44}^\nu, a_{51}^\nu, a_{52}^\nu, a_{54}^\nu,
$$
then the linear operator, defined by the matrix $B$ is a local automorphism. Note that, if the left part of any equation of the system (\ref{(5)}) is equal to zero, then the right part of this equation is also equal to zero. We show that for each element $x\in\pi_2 $, the system of equations (\ref{(5)}) has a solution.

1) Now, suppose that $\nu_1\neq 0.$ Then, from (\ref{(5)}) it follows that
\begin{equation}  \nonumber
\left\{%
\begin{array}{lllll}
    {\alpha_{11}^\nu=b_{11}} \\
    {\nu_1\alpha_{21}^\nu=\nu_1b_{21}+\nu_2b_{22}-\nu_2b_{11}^2}\\
    {\nu_1\alpha_{31}^\nu+2\nu_2\alpha_{11}^\nu\alpha_{21}^\nu+\nu_3b_{11}^3+\nu_4\alpha_{34}^\nu=\nu_1b_{31}+\nu_2b_{32}+\nu_3b_{33}+\nu_4b_{34}} \\
    {\nu_1\alpha_{41}^\nu+\nu_4(b_{11}+\alpha_{41}^\nu)=\nu_1b_{41}+\nu_4b_{44}} \\
    {\nu_1\alpha_{51}^\nu+\nu_2(2b_{11}\alpha_{41}^\nu+(\alpha_{41}^\nu)^2)+\nu_4\alpha_{54}^\nu+\nu_5(b_{11}+\alpha_{41}^\nu)^2
    =\nu_1b_{51}+\nu_2b_{52}+\nu_4b_{54}+\nu_5b_{55}}
\end{array}%
\right.
\end{equation}

If $0\neq \nu_1=-\nu_4$, then, from
\[
\nu_1\alpha_{41}^\nu+\nu_4b_{11}+\nu_4\alpha_{41}^\nu=\nu_1b_{41}+\nu_4b_{44}
\]
it follows that
\[
(\nu_1+\nu_4)\alpha_{41}^\nu=\nu_1b_{41}+\nu_4b_{44}-\nu_4b_{11},
\]
i.e.,
\[
\nu_1b_{41}+\nu_4b_{44}-\nu_4b_{11}=0,
\]
Hence, $b_{41}-b_{44}+b_{11}=0$, i.e., $b_{44}=b_{41}+b_{11}$.
Therefore, the system of equations (\ref{(5)}) we can rewrite in the following form:
\begin{small}
\begin{equation}\label{(6)}
\left\{%
\begin{array}{lllll}
    {\nu_1\alpha_{11}^\nu=b_{11}\nu_1} \\
    {\nu_1\alpha_{21}^\nu+\nu_2(\alpha_{11}^\nu)^2=\nu_1b_{21}+\nu_2b_{22}} \\
    {\nu_1\alpha_{31}^\nu+2\nu_2\alpha_{11}^\nu\alpha_{21}^\nu+\nu_3(\alpha_{11}^\nu)^3+\nu_4\alpha_{34}^\nu=\nu_1b_{31}+\nu_2b_{32}+\nu_3b_{33}+\nu_4b_{34}} \\
    {\nu_1\alpha_{41}^\nu+\nu_4(\alpha_{11}^\nu+\alpha_{41}^\nu)=(\nu_1+\nu_4)b_{41}+\nu_4b_{11}} \\
    {\nu_1\alpha_{51}^\nu+\nu_2(2\alpha_{11}^\nu\alpha_{41}^\nu+(\alpha_{41}^\nu)^2)+\nu_4\alpha_{54}^\nu+\nu_5(\alpha_{11}^\nu+\alpha_{41}^\nu)^2=
    \nu_1b_{51}+\nu_2b_{52}+\nu_4b_{54}+\nu_5b_{55}}
\end{array}%
\right.
\end{equation}
\end{small}
Now, if $\nu_1\neq -\nu_4$, then the system of equations (\ref{(6)}) has a solution.
Thus, we can find at least one solution of the system (\ref{(6)}) in the case $\nu_1\neq 0$.

	2) Now, suppose that $\nu_1=0 $ and $\nu_4\neq 0. $ Then we have
\begin{small}
\begin{equation}\label{(7)}
\left\{%
\begin{array}{llll}
    {\nu_2(\alpha_{11}^\nu)^2=\nu_2b_{22}} \\
    {2\nu_2\alpha_{11}^\nu\alpha_{21}^\nu+\nu_3(\alpha_{11}^\nu)^3+\nu_4\alpha_{34}^\nu=\nu_2b_{32}+\nu_3b_{33}+\nu_4b_{34}} \\
    {\nu_4(\alpha_{11}^\nu+\alpha_{41}^\nu)=\nu_4b_{41}+\nu_4b_{11}} \\
    {\nu_2(2\alpha_{11}^\nu\alpha_{41}^\nu+(\alpha_{41}^\nu)^2)+\nu_4\alpha_{54}^\nu+\nu_5(\alpha_{11}^\nu+\alpha_{41}^\nu)^2=
    \nu_2b_{52}+\nu_4b_{54}+\nu_5b_{55}}
\end{array}%
\right.
\end{equation}
\end{small}
If $\nu_2\neq 0$, then
\begin{small}
\begin{equation}\label{(8)}
\left\{%
\begin{array}{llll}
    {\alpha_{11}^\nu=\sqrt{b_{22}}} \\
    {2\nu_2\sqrt{b_{22}}\alpha_{21}^\nu+\nu_4\alpha_{34}^\nu=-\nu_3(\sqrt{b_{22}})^3+\nu_2b_{32}+\nu_3b_{33}+\nu_4b_{34}} \\
    {\alpha_{41}^\nu=-\sqrt{b_{22}}+b_{41}+b_{11}} \\
    {\nu_4\alpha_{54}^\nu=-\nu_2(2\sqrt{b_{22}}\alpha_{41}^\nu+(\alpha_{41}^\nu)^2)-\nu_5(\sqrt{b_{22}}+\alpha_{41}^\nu)^2+
    \nu_2b_{52}+\nu_4b_{54}+\nu_5b_{55}}
\end{array}%
\right.
\end{equation}
\end{small}
In the order indicated in the last system (\ref{(8)}) of equations we can find a solution of this system
if $\nu_1=0$, $\nu_2\neq 0$ and $\nu_4\neq 0$. Else, if $\nu_2=0$, then we have the following system of equations
\begin{small}
\begin{equation}\label{(9)}
\left\{%
\begin{array}{lll}
    {\nu_3(\alpha_{11}^\nu)^3+\nu_4\alpha_{34}^\nu=\nu_3b_{33}+\nu_4b_{34}} \\
    {\alpha_{11}^\nu+\alpha_{41}^\nu=b_{41}+b_{11}} \\
    {\nu_4\alpha_{54}^\nu=-\nu_5(b_{41}+b_{11})^2+\nu_4b_{54}+\nu_5b_{55}}
\end{array}%
\right.
\end{equation}
\end{small}
Since $\nu_4\neq 0$, we have the system of equations (\ref{(9)}) has a solution.

	3) If $\nu_1=0,  \nu_4=0$ and $\nu_2\neq0,$ then we have
\begin{small}
\begin{equation}\label{(10)}
\left\{%
\begin{array}{llll}
    {\alpha_{11}^\nu=\sqrt{b_{22}}} \\
    {2\nu_2\sqrt{b_{22}}\alpha_{21}^\nu=-\nu_3(\sqrt{b_{22}})^3+\nu_2b_{32}+\nu_3b_{33}} \\
    {\nu_2(2\sqrt{b_{22}}\alpha_{41}^\nu+(\alpha_{41}^\nu)^2)+\nu_5(\sqrt{b_{22}}+\alpha_{41}^\nu)^2=
    \nu_2b_{52}+\nu_5b_{55}}
\end{array}%
\right..
\end{equation}
\end{small}
This system we can rewrite in the following form
\begin{small}
\begin{equation}\label{(10)}
\left\{%
\begin{array}{llll}
    {\alpha_{11}^\nu=\sqrt{b_{22}}} \\
    {2\nu_2\sqrt{b_{22}}\alpha_{21}^\nu=-\nu_3(\sqrt{b_{22}})^3+\nu_2b_{32}+\nu_3b_{33}} \\
    ({\nu_2+\nu_5)(\sqrt{b_{22}}+\alpha_{41}^\nu)^2=\nu_2(\sqrt{b_{22}})^2+
    \nu_2b_{52}+\nu_5b_{55}}
\end{array}%
\right..
\end{equation}
\end{small}
If $\nu_2+\nu_5=0$, then $\nu_2(\sqrt{b_{22}})^2+\nu_2b_{52}-\nu_2b_{55}=0$, i.e., $b_{55}=b_{22}+b_{52}$.
Only in this case the system \ref{(10)} will have a solution. So, the system \ref{(6)} we replace by the following system:
\begin{small}
\begin{equation}\label{(11)}
\left\{%
\begin{array}{llllll}
    {\nu_1\alpha_{11}^\nu=b_{11}\nu_1} \\
    {\nu_1\alpha_{21}^\nu+\nu_2(\alpha_{11}^\nu)^2=\nu_1b_{21}+\nu_2b_{22}} \\
    {\nu_1\alpha_{31}^\nu+2\nu_2\alpha_{11}^\nu\alpha_{21}^\nu+\nu_3(\alpha_{11}^\nu)^3+\nu_4\alpha_{34}^\nu=\nu_1b_{31}+\nu_2b_{32}+\nu_3b_{33}+\nu_4b_{34}} \\
    {\nu_1\alpha_{41}^\nu+\nu_4(\alpha_{11}^\nu+\alpha_{41}^\nu)=(\nu_1+\nu_4)b_{41}+\nu_4b_{11}} \\
    {\nu_1\alpha_{51}^\nu+\nu_2(2\alpha_{11}^\nu\alpha_{41}^\nu+(\alpha_{41}^\nu)^2)+\nu_4\alpha_{54}^\nu+\nu_5(\alpha_{11}^\nu+\alpha_{41}^\nu)^2}\\
    {=\nu_1b_{51}+\nu_2b_{52}+\nu_4b_{54}+\nu_5(b_{22}+b_{52})}
\end{array}%
\right.
\end{equation}
\end{small}
Thus, the system \ref{(11)} has a solution in the case $\nu_1=0,  \nu_4=0$ and $\nu_2\neq 0$, and $\nu_2+\nu_5=0$.
Else, if $\nu_2+\nu_5\neq 0$, then the system \ref{(10)}, and, hence, the system \ref{(11)} has a solution.
So, in the case $\nu_1=0,  \nu_4=0$ and $\nu_2\neq 0$ the system \ref{(11)} has a solution.

	4) In the case $\nu_1=0,  \nu_4=0, \nu_2=0$ and $\nu_3\neq0$ we have
\begin{small}
\begin{equation}\label{(12)}
\left\{%
\begin{array}{lllll}
    {\nu_3(\alpha_{11}^\nu)^3=\nu_3b_{33}} \\
    {\nu_5(\alpha_{11}^\nu+\alpha_{41}^\nu)^2=\nu_5(b_{22}+b_{52})}
\end{array}%
\right..
\end{equation}
\end{small}
From (\ref{(12)}) it follows that
\begin{small}
\begin{equation}\label{(13)}
\left\{%
\begin{array}{lllll}
    {\alpha_{11}^\nu=\sqrt[3]{b_{33}}} \\
    {\nu_5(\sqrt[3]{b_{33}}+\alpha_{41}^\nu)^2=\nu_5(b_{22}+b_{52})}
\end{array}%
\right..
\end{equation}
\end{small}
Clearly, the system of equations \ref{(13)} has a solution for any complex number $\nu_5$.
Hence, in this case the system of equations \ref{(11)} also has a solution.

5) The case $\nu_1=0,  \nu_4=0, \nu_2=0, \nu_3=0$ and $\nu_5\in {\Bbb C}$ is obvious, i.e., in this case system of equations \ref{(11)} also has a solution.
Thus, the system of linear equations \ref{(11)} always has a solution, i.e., the linear operator, generated by the matrix (\ref{pi_22}) is a local automorphism.
The proof is complete.
\end{proof}

\begin{theorem} \label{3.3}
A linear operator on associative algebra $\pi_3 $ is a local automorphism if and only if the matrix of this linear operator has the following matrix form
\begin{equation}\label{(136)}
\left(
  \begin{array}{ccccc}
    b_{11} & 0 & 0 & 0 & 0 \\
    b_{21} & b_{11}^2 & 0 & 0 & 0 \\
    b_{31} & b_{32} & \pm b_{11}^3 & b_{34} & 0 \\
    0 & 0 & 0 &   b_{11} & 0 \\
    b_{51} & 0 & 0 & b_{54} & b_{11}^2 \\
  \end{array}
\right),
\end{equation}
where $b_{11}\neq 0$.
\end{theorem}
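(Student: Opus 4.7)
The plan is to follow the same two-part strategy as in the proof of Theorem~\ref{3.2}: first derive the necessary zero pattern and diagonal relations on the matrix $B=(b_{ij})_{i,j=1}^{5}$ of a local automorphism $\nabla$ of $\pi_3$ by probing $\nabla$ on carefully chosen elements $\nu\in\pi_3$ and comparing $B\bar\nu$ with $A_\nu\bar\nu$, where $A_\nu$ has the form (\ref{(2)}) from Theorem~\ref{2.5}; then verify sufficiency by showing that for every $\nu\in\pi_3$ the resulting system of five scalar equations admits a solution in the free entries of some matrix $A_\nu$ of form (\ref{(2)}).

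For necessity I would first evaluate $\nabla$ on each basis vector $e_i$, $i=1,\dots,5$. Comparing $B\bar e_i$ with the $i$th column of (\ref{(2)}) immediately pins down the zero pattern of $B$ (in particular $b_{41}=b_{42}=b_{43}=b_{45}=b_{52}=b_{53}=0$ and every strictly upper-triangular entry except $b_{34}$ vanishes) and writes the diagonal entries as $(a_{1,1}^{e_i})^{k_i}$ for suitable $k_i\in\{1,2,3\}$. I would then probe mixed elements to link these diagonal entries. Taking $\nu=e_1+e_4$ yields $a_{1,1}^\nu=b_{11}$ from the first coordinate and $b_{44}=a_{1,1}^\nu=b_{11}$ from the fourth. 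Taking $\nu=e_2+e_4$ again forces $a_{1,1}^\nu=b_{11}$ from the fourth coordinate and hence $b_{22}=b_{11}^2$ from the second; the symmetric choice $\nu=e_2+e_5$ gives $b_{55}=b_{11}^2$. Finally, the key choice $\nu=e_3+e_5$ forces $(a_{1,1}^\nu)^2=b_{55}=b_{11}^2$ and $(a_{1,1}^\nu)^3=b_{33}$ simultaneously, so $a_{1,1}^\nu=\pm b_{11}$ and $b_{33}=(\pm b_{11})^3=\pm b_{11}^3$; this is the step that produces the characteristic sign in (\ref{(136)}).

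For sufficiency I would take $B$ of the form (\ref{(136)}), write out the five scalar equations coming from $B\bar\nu=A_\nu\bar\nu$ in the unknowns $a_{1,1}^\nu,a_{2,1}^\nu,a_{3,1}^\nu,a_{3,4}^\nu,a_{5,1}^\nu,a_{5,4}^\nu$, and split into cases according to which of $\nu_1,\nu_2,\nu_3,\nu_4,\nu_5$ vanish, in strict analogy with the chain (\ref{(5)})--(\ref{(11)}) from the proof of Theorem~\ref{3.2}. If $\nu_1\neq 0$ then the first equation fixes $a_{1,1}^\nu=b_{11}$ and, thanks to $b_{22}=b_{11}^2$, $b_{44}=b_{11}$, $b_{55}=b_{11}^2$, the remaining equations become linear in the other unknowns and are solved by back-substitution. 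The subcase $\nu_1=0$, $\nu_4\neq 0$ is handled identically with the fourth equation playing the role of the first. When $\nu_1=\nu_4=0$ but $\nu_2\neq 0$ one reads $a_{1,1}^\nu=\pm b_{11}$ from the second equation and proceeds analogously; the remaining cases (only $\nu_3$ nonzero, or only $\nu_5$ nonzero) reduce to extracting a cube or square root in $\mathbb{C}$, which is always possible.

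The main obstacle lies in the case $\nu_1=\nu_2=\nu_4=0$ with $\nu_3\nu_5\neq 0$. Here the fifth equation forces $(a_{1,1}^\nu)^2=b_{11}^2$, so $a_{1,1}^\nu\in\{b_{11},-b_{11}\}$, while the third simultaneously requires $(a_{1,1}^\nu)^3=b_{33}$; consistency is possible if and only if $b_{33}\in\{b_{11}^3,-b_{11}^3\}$. Conversely, the assumption $b_{33}=\pm b_{11}^3$ in (\ref{(136)}) is exactly what allows us to pick $a_{1,1}^\nu$ with the matching sign, closing the argument. The only other care needed is to verify that no other combination of coordinates of $\nu$ imposes a further constraint on $B$ beyond those recorded in (\ref{(136)})---that is, that $b_{21},b_{31},b_{32},b_{34},b_{51},b_{54}$ really do remain free parameters---which can be read off directly from the case analysis above.
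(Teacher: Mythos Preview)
Your proposal is correct and follows essentially the same two-part strategy as the paper: derive the zero pattern and diagonal constraints by comparing $B\bar\nu$ with $A_\nu\bar\nu$, then verify sufficiency by a case analysis on which coordinates of $\nu$ vanish. The one cosmetic difference is that you isolate the constraints $b_{44}=b_{11}$, $b_{22}=b_{55}=b_{11}^2$, $b_{33}=\pm b_{11}^3$ up front via the specific test vectors $e_1+e_4$, $e_2+e_4$, $e_2+e_5$, $e_3+e_5$, whereas the paper extracts these same constraints in the course of the sufficiency case split (cases 1, 2 and 4 of system (\ref{(138)})); both arrive at the same conditions and your presentation is arguably cleaner.
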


\begin{proof}
Let $\nabla$ be a local automorphism on $\pi_3,$ let $B=(b_{ij} )_{ij=1}^5$ be the matrix, defining the linear operator $\nabla$. By the definition, for any element $\nu\in\pi_3$, there exists a automorphism operator $L_\nu$ such that $\nabla(\nu)=L_\nu(\nu).$ Then, for the appropriate matrix $A_\nu$ of the operator $L_\nu$, we have

\begin{equation}\label{(137)}
\overline{L_\nu(\nu)}=B\overline{\nu}=A_\nu\overline{\nu}, ~~ A_\nu=(a_{ij}^\nu)_{ij=1}^5.~~
\end{equation}
By the form (2) of the matrix of an automorphism on $\pi_3,$  using equalities $\nabla(e_i)=L_{e_i}(e_i)=B(\overline{e_i}),i=1,2,3,4,5,$ we get
$$b_{1,1}=a_{1,1}^{e_1},~ b_{2,1}=a_{2,1}^{e_1},~ b_{3,1}=a_{3,1}^{e_1},~ b_{4,1}=0,~ b_{5,1}=a_{5,1}^{e_1},$$
$$
b_{1,2}=0,~ b_{2,2}=(a_{1,1}^{e_2})^2,~ b_{3,2}=2a_{1,1}^{e_2}a_{2,1}^{e_2},~ b_{4,2}=0,~ b_{5,2}=0,
$$
$$b_{1,3}=0,~ b_{2,3}=0,~ b_{3,3}=(a_{1,1}^{e_3})^3,~ b_{4,3}=0,~ b_{5,3}=0,$$
$$b_{1,4}=0,~ b_{2,4}=0,~ b_{3,4}=a_{3,4}^{e_4},~ b_{4,4}=a_{1,1}^{e_4},~ b_{5,4}=a_{5,4}^{e_4},$$
$$b_{1,5}=0,~ b_{2,5}=0,~ b_{3,5}=0,~ b_{4,5}=0,~ b_{5,5}=(a_{1,1}^{e_5})^2.$$
 Thus, according to \ref{(137)} and Theorem \ref{2.5}, we get the following system of linear equations
 \begin{equation} \label{(138)}
\left\{%
 \begin{array}{lllll}
    {\nu_1\alpha_{11}^\nu=b_{11}\nu_1} \\
    {\nu_1\alpha_{21}^\nu+\nu_2(\alpha_{11}^\nu)^2=\nu_1b_{21}+\nu_2b_{22}^2} \\
    {\nu_1\alpha_{31}^\nu+2\nu_2\alpha_{11}^\nu\alpha_{21}^\nu+\nu_3(\alpha_{11}^\nu)^3+\nu_4\alpha_{34}^\nu=\nu_1b_{31}+\nu_2b_{32}+\nu_3b_{33}^3+\nu_4b_{34}} \\
    {\nu_4\alpha_{11}^\nu=\nu_4b_{44}} \\
    {\nu_1\alpha_{51}^\nu+\nu_4\alpha_{54}^\nu+\nu_5(\alpha_{11}^\nu)^2=\nu_1b_{51}+\nu_4b_{54}+\nu_5b_{55}^2}
\end{array}%
\right.
\end{equation}

If, for each element $x\in\pi_3$, there exists a matrix $A_x$ of the form \ref{(2)} such that
$$B\overline{x}=A_x\overline{x},$$
then the linear operator, defined by the matrix B is a local automorphism. In other words, if, for each element $x\in\pi_3,$ the system of linear equations \ref{(138)} has a solution with respect to the variables
$$\alpha_{1,1}^\nu, \alpha_{2,1}^\nu, \alpha_{3,1}^\nu, \alpha_{3,4}^\nu, \alpha_{5,1}^\nu, \alpha_{5,4}^\nu,$$
then the linear operator, defined by the matrix B is a local automorphism. Note that, if the left part of any equation of the system \ref{(138)} is equal to zero, then the right part of this equation is also equal to zero. We show that for each element $x\in\pi_3,$ the system of equations \ref{(138)} has a solution.

1) Now, suppose that $\nu_1\neq0$. Then $\alpha_{11}^\nu=b_{11}.$ From fourth equation of the system \ref{(138)} it follows that  $b_{11}=b_{44}$ if $\nu_4\neq0.$ So, we should take $b_{11}=b_{44}$ in the system \ref{(138)}. Since the variables of the other equations of the system \ref{(138)} at the coefficient $\nu_1$ are pairwise distinct, in this case the system \ref{(138)} has a solution for any $\nu_1\neq0, \nu_2,  \nu_3,  \nu_4 $ and $\nu_5$ if $b_{11}=b_{44}$.

	2) In the case $\nu_1=0$ and $\nu_2\neq0$ from \ref{(138)} it follows that
\begin{equation}   \label{(139)}
\left\{%
 \begin{array}{lllll}
    {0=0} \\
    {\nu_2(\alpha_{11}^\nu)^2=\nu_2b_{22}^2} \\
    {2\nu_2\alpha_{11}^\nu\alpha_{21}^\nu+\nu_3(\alpha_{11}^\nu)^3+\nu_4\alpha_{34}^\nu=\nu_2b_{32}+\nu_3b_{33}^3+\nu_4b_{34}} \\
    {\nu_4\alpha_{11}^\nu=\nu_4b_{11}} \\
    {\nu_4\alpha_{54}^\nu+\nu_5(\alpha_{11}^\nu)^2=\nu_4b_{54}+\nu_5b_{55}^2}

\end{array}%
\right..
\end{equation}
Then $(\alpha_{11}^\nu)^2=b_{22}^2.$ Hence $\alpha_{11}^\nu=\mp b_{22}.$ From the fourth equation of the system \ref{(138)} it follows that  $b_{11}=\mp b_{22}$ if $ \nu_4\neq0.$ So, we should take $b_{22}^2=b_{11}^2$ in the system \ref{(138)}.
Also, if $\nu_4=0$ and $\nu_5\neq 0$, then we should take $b_{55}^2=b_{11}^2$.

Hence, from \ref{(139)} it follows that
\begin{equation}\label{(1310)}
\left\{%
 \begin{array}{lllll}
    {0=0} \\
    {\nu_2(\alpha_{11}^\nu)^2=\nu_2b_{11}^2} \\
    {2\nu_2b_{11}\alpha_{21}^\nu+\nu_3b_{11}^3+\nu_4a_{34}^\nu=\nu_2b_{32}+\nu_3b_{33}^3+\nu_4b_{34}} \\
    {\nu_4b_{11}=\nu_4b_{11}} \\
    {\nu_4\alpha_{54}^\nu+\nu_5b_{11}^2=\nu_4b_{54}+\nu_5b_{11}^2}

\end{array}%
\right..
\end{equation}
Clearly, the system \ref{(1310)} of linear equations has a solution with respect to the variables $\alpha_{21}^\nu, \alpha_{34}^\nu$ and $\alpha_{54}^\nu.$ Therefore, in this case, the system \ref{(138)} also has a solution for any $\nu_1=0,  \nu_2\neq0, \nu_3, \nu_4$ and $\nu_5$ if $b_{22}^2=b_{55}^2=b_{11}^2$.

	3) If $\nu_1=0$, $\nu_2=0$ and $\nu_4\neq 0$, then
\begin{equation}\label{(1311)}
\left\{%
 \begin{array}{lllll}
    {0=0} \\
    {0=0} \\
    {\nu_3(\alpha_{11}^\nu)^3+\nu_4\alpha_{34}^\nu=\nu_3b_{33}^3+\nu_4b_{34}} \\
    {\nu_4\alpha_{11}^\nu=\nu_4b_{11}} \\
    {\nu_4\alpha_{54}^\nu+\nu_5(\alpha_{11}^\nu)^2=\nu_4b_{54}+\nu_5b_{11}^2}
\end{array}%
\right..
\end{equation}
Hence, $\alpha_{11}^\nu=b_{11}$ and from \ref{(1311)} it follows that
\begin{equation}\label{(1312)}
\left\{%
 \begin{array}{lllll}
    {0=0} \\
    {0=0} \\
    {\nu_3b_{11}^3+\nu_4\alpha_{34}^\nu=\nu_3b_{33}^3+\nu_4b_{34}} \\
    {\nu_4b_{11}=\nu_4b_{11}} \\
    {\nu_4\alpha_{54}^\nu+\nu_5b_{11}^2=\nu_4b_{54}+\nu_5b_{11}^2}
\end{array}%
\right..
\end{equation}
Clearly, the system \ref{(1312)} of linear equations has a solution with respect to the variables $\alpha_{34}^\nu$ and $\alpha_{54}^\nu$.

	4) If $\nu_1=0, \nu_2=0, \nu_4=0 $ and $\nu_3\neq 0,$ then
\begin{equation}\label{(1313)}
\left\{%
 \begin{array}{lllll}
    {0=0} \\
    {0=0} \\
    {\nu_3(\alpha_{11}^\nu)^3=\nu_3b_{33}^3} \\
    {0=0} \\
    {\nu_5(\alpha_{11}^\nu)^2=\nu_5b_{11}^2}
\end{array}%
\right..
\end{equation}
Then $\alpha_{11}^\nu=b_{33}$. From fifth equation of the system \ref{(1313)} it follows that  $(\alpha_{11}^\nu)^2=b_{11}^2$, i.e., $\alpha_{11}^\nu=\pm b_{11}$, if $\nu_5\neq 0$. So, we should take $b_{33}^3=\pm b_{11}^3$ in the system \ref{(138)}. In the case $\nu_1=0, \nu_2=0$ and $\nu_4=0$ the system (\ref{(1313)}) has a solution for any $\nu_3\neq 0$ and $\nu_5$ if $b_{33}^3=\pm b_{11}^3$. So, the system \ref{(138)} also has a solution for any $\nu_1=0$,  $\nu_2=0$, $\nu_4=0$, $\nu_3\neq 0$ and $\nu_5$ if $b_{22}^2=b_{55}^2=b_{11}^2$ and $b_{33}^3=\pm b_{11}^3$.

		5) In the case $\nu_1=0, \nu_2=0, \nu_3=0, \nu_4=0$ the system \ref{(138)} also has a solution. Hence, the system \ref{(138)} has a solution for any $\nu_1$,  $\nu_2$, $\nu_3$, $\nu_4$ and $\nu_5$ if $b_{22}^2=b_{55}^2=b_{11}^2$ and $b_{33}^3=\pm b_{11}^3$.

Thus, the system of linear equations
\begin{equation}\label{eq}\nonumber
\left\{%
 \begin{array}{lllll}
    {\nu_1\alpha_{11}^\nu=b_{11}\nu_1} \\
    {\nu_1\alpha_{21}^\nu+\nu_2(\alpha_{11}^\nu)^2=\nu_1b_{21}+\nu_2b_{11}^2} \\
    {\nu_1\alpha_{31}^\nu+2\nu_2\alpha_{11}^\nu\alpha_{21}^\nu+\nu_3(\alpha_{11}^\nu)^3+\nu_4\alpha_{34}^\nu=\nu_1b_{31}+\nu_2b_{32}\pm\nu_3b_{11}^3+\nu_4b_{34}} \\
    {\nu_4\alpha_{11}^\nu=\nu_4b_{44}} \\
    {\nu_1\alpha_{51}^\nu+\nu_4\alpha_{54}^\nu+\nu_5(\alpha_{11}^\nu)^2=\nu_1b_{51}+\nu_4b_{54}+\nu_5b_{11}^2}

\end{array}%
\right.
\end{equation}
always has a solution, i.e., the linear operator, generated by the matrix (\ref{(136)}) is a local automorphism.
The proof is complete.
\end{proof}

\section{Principles for derivations and local derivations of finite dimensional algebras}

Let $\mathcal{A}$ be an algebra, and, let $D:\mathcal{A}\to \mathcal{A}$ be a linear mapping. Then $D$ is called derivation if,
for any $x$, $y\in \mathcal{A}$, $D(xy)=D(x)y+xD(y)$.

A linear mapping $\nabla:\mathcal{A}\to \mathcal{A}$ is called local derivation if, for each $x\in\mathcal{A}$, there exists a derivation
$D_x:\mathcal{A}\to \mathcal{A}$ depending on $x$ such that $\nabla(x)=D_x(x)$.
Then the following theorem takes place.

\begin{theorem} \label{4.1}
Let $\mathcal{A}$ be an $n$-dimensional algebra, and, let $D:\mathcal{A}\to \mathcal{A}$ be a derivation and
$\nabla:\mathcal{A}\to \mathcal{A}$ be a local derivation of $\mathcal{A}$. Let $M=(a_{i,j})_{i,j=1}^n$ be the matrix form
of derivations and $N=(b_{i,j})_{i,j=1}^n$ be the matrix form of local derivations of the algebra $\mathcal{A}$.
Then the following principles takes place

0) for any $i,j$ from $\{1,2,\dots,n\}$, if $a_{i,j}=0$, then $b_{i,j}=0$.

1) for any $i,j,k,m$ from $\{1,2,\dots,n\}$ such that $i<k$, $j<m$, if $a_{i,j}=a_{k,m}$, $a_{i,m}=a_{k,j}=0$,
then $b_{i,j}=b_{k,m}$.

2) for any $i,j,k,m$ from $\{1,2,\dots,n\}$ such that $i<k$, $j<m$, if $a_{i,j}=a_{k,m}$ and one
of the components $a_{i,m}$, $a_{k,j}$ is nonzero and is not contained in the expression of all other components of the matrix form $M$,
then the components $b_{i,j}$ $b_{k,m}$ are generally distinct.

3) for any $i,j,k,m$ from $\{1,2,\dots,n\}$ such that $i<k$, $j<m$, if one
of the components $a_{i,m}$, $a_{k,j}$ is nonzero and is not contained in the expression of all other components of the matrix form $M$,
then the components $b_{i,j}$ $b_{k,m}$ are generally distinct.

4) for any $i$ from $\{1,2,\dots,n\}$, if the component $a_{i+2,i}$ is nonzero and is not contained in the expression of all other components of the matrix form $M$ and $a_{i+1,i}\neq 0$, $a_{i+2,i+1}\neq 0$, $a_{i,i}\neq 0$, $a_{i+1,i+1}\neq 0$ and $a_{i+2,i+2}\neq 0$ then the components $b_{i+1,i}$, $b_{i+2,i+1}$, $b_{i,i}$, $b_{i+1,i+1}$ and $b_{i+2,i+2}$ are generally pairwise distinct.
\end{theorem}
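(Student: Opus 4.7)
The plan is to express individual entries $b_{i,j}$ of the local-derivation matrix $N$ in terms of entries of specific derivations, using the defining identity $\nabla(\nu)=D_\nu(\nu)$ at carefully chosen test vectors $\nu$, and then to separate the positive constraints (Principles 0 and 1) from the negative assertions that require exhibiting explicit examples (Principles 2, 3, 4).

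For Principle 0, I would apply the local-derivation property at $\nu=e_j$: the $j$-th column of $N$ equals the $j$-th column of the matrix of the derivation $D_{e_j}$, so $b_{i,j}=a_{i,j}^{e_j}$. Since $a_{i,j}$ vanishes identically in the matrix form $M$, every derivation carries a zero in position $(i,j)$, and hence $b_{i,j}=0$.

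For Principle 1, I would test $\nabla$ at $\nu=e_j+e_m$. Equating the $i$-th and $k$-th components of $\nabla(\nu)$ and $D_\nu(\nu)$ gives
\[
b_{i,j}+b_{i,m}=a_{i,j}^\nu+a_{i,m}^\nu,\qquad b_{k,j}+b_{k,m}=a_{k,j}^\nu+a_{k,m}^\nu.
\]
The hypothesis $a_{i,m}=a_{k,j}=0$ combined with Principle~0 kills the unwanted $b$-terms on the left and the corresponding $a^\nu$-terms on the right. Because $a_{i,j}$ and $a_{k,m}$ are the \emph{same} expression in the free parameters of $M$, their specialisations at $\nu$ agree; the two reduced equations then force $b_{i,j}=a_{i,j}^\nu=a_{k,m}^\nu=b_{k,m}$.

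For Principles 2, 3, and 4 the task is the opposite: to exhibit local derivations realising the asserted inequalities. My proposal is to build $\nabla$ column by column, setting column $j$ of $N$ equal to the $j$-th column of an auxiliary derivation $D^{(j)}$ and column $m$ equal to the $m$-th column of an independently chosen $D^{(m)}$. Because the entry $a_{i,m}$ (or $a_{k,j}$) is nonzero and occurs in no other position of $M$, its value can be prescribed independently in each auxiliary derivation, so the matching entries $b_{i,j}=a_{i,j}^{(j)}$ and $b_{k,m}=a_{k,m}^{(m)}$ can be chosen to be unequal. The main obstacle is to verify that the resulting linear map really is a local derivation: for an arbitrary $\nu=\sum\nu_p e_p$ one must still find a derivation $D_\nu$ satisfying $\nabla(\nu)=D_\nu(\nu)$, which reduces to a parametric linear system in the free derivation parameters. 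The isolated parameter $a_{i,m}$ (respectively $a_{k,j}$), being absent from every other entry of $M$, absorbs the potential obstruction and keeps this system consistent. The same template, applied column-wise to $e_i$, $e_{i+1}$, $e_{i+2}$, yields the three-position version in Principle 4, where the isolated parameter $a_{i+2,i}$ plays the analogous role.
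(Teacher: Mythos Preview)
Your proposal is correct and follows essentially the same approach as the paper. For Principles~0 and~1 you test $\nabla$ at $e_j$ and $e_j+e_m$ exactly as the paper does, and for Principles~2--4 your argument that the isolated free parameter $a_{i,m}$ (or $a_{k,j}$, or $a_{i+2,i}$) absorbs the obstruction in the parametric linear system $B\bar\nu=A_\nu\bar\nu$ is precisely the mechanism the paper invokes when it passes from the $2\times 2$ subsystem to the full system; your column-by-column construction is a slightly more explicit packaging of the same idea, but not a different route.
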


\begin{proof}
Let $\nabla$ be a local derivation on $\mathcal{A}$, let $B=(b_{ij} )^n_{ij=1} $ be the matrix of the form $N$, defining the local derivation $\nabla$. By the definition, for any element  $\nu\in\mathcal{A}$, there exists a derivation $D_\nu$ such that $\nabla(\nu)=D_\nu(\nu)$. Then, for the appropriate matrix $A_\nu$ of the form $M$ of the operator $D_\nu$, we have

\begin{equation}  \label{(30)}
\overline{D_\nu(\nu)}=B\bar{\nu}=A_\nu\bar{\nu}, \ \ \ A_\nu=(a^\nu_{i,j} )^n_{i,j=1}. ~~~
\end{equation}

By the form $M$ of the matrix of a automorphism on $\mathcal{A}$, using equalities $\nabla(e_i)=D_{e_i}(e_i)=B\bar{e_i},$ $i=1,2,\dots,n$ we get
\[
b_{\xi,\eta}=a_{\xi,\eta}^{e_\eta}, \xi,\eta=1,2,\dots,n.
\]
In particular,
\[
b_{i,j}=a_{i,j}^{e_j}, b_{k,m}=a_{k,m}^{e_m}, b_{i,m}=a_{i,m}^{e_m}, b_{k,j}=a_{k,j}^{e_j}.
\]

First we prove Item 1) of Theorem \ref{4.1}.
Since $\nabla$ is additive, we have
\[
\nabla(e_j+e_m)=\nabla(e_j)+\nabla(e_m).
\]
By the condition of Item 1) of Theorem \ref{4.1}, from this it follows that
\[
a_{i,j}^{e_j+e_m}=a_{i,j}^{e_j}, a_{k,m}^{e_j+e_m}=a_{k,m}^{e_m},
\]
\[
a_{i,j}^{e_j+e_m}=a_{k,m}^{e_j+e_m}.
\]
So, $a_{i,j}^{e_j}=a_{k,m}^{e_m}$. Hence, $b_{i,j}=b_{k,m}$.
This ends the proof of Item 1).

Now we prove Item 2) of Theorem \ref{4.1}. Suppose that $a_{i,m}\neq 0$, in particular, we may suppose that
$a_{i,m}^\nu\neq 0$ for any element  $\nu\in\mathcal{A}$.
The equality $A_\nu\bar{\nu}=B\bar{\nu}$ from \ref{(30)} gives
a system of linear equations
\begin{equation}\label{(31)}
\left\{%
\begin{array}{lllll}
    {a^\nu_{1,1}\nu_1+a^\nu_{1,2}\nu_2+\dots+a^\nu_{1,n}\nu_n=b_{1,1}\nu_1+b_{1,2}\nu_2+\dots+b_{1,n}\nu_n} \\
    {a^\nu_{2,1}\nu_1+a^\nu_{2,2}\nu_2+\dots+a^\nu_{2,n}\nu_n=b_{2,1}\nu_1+b_{2,2}\nu_2+\dots+b_{2,n}\nu_n} \\
    {\dots \dots \dots \dots \dots \dots \dots \dots \dots\dots \dots \dots \dots \dots \dots \dots \dots \dots \dots} \\
    {a^\nu_{n,1}\nu_1+a^\nu_{n,2}\nu_2+\dots+a^\nu_{n,n}\nu_n=b_{n,1}\nu_1+b_{n,2}\nu_2+\dots+b_{n,n}\nu_n} \\
\end{array}%
\right.
\end{equation}
with respect to the variables
$$
a^\nu_{i,j}, i=1,2,\dots,n, j=1,2,\dots,n.
$$
Since the matrix $B$ is of the form $N$ and the matrix $A_\nu$ is of the form $M$, we have, for any $\nu\in\mathcal{A}$,
the system of linear equations \ref{(31)} has a solution. At the same time,
the system of linear equations
\begin{equation}\label{(32)}
\left\{%
\begin{array}{lllll}
    {a^\nu_{i,j}\nu_j+a^\nu_{i,m}\nu_m=b_{i,j}\nu_j+b_{i,m}\nu_m} \\
    {a^\nu_{k,j}\nu_j+a^\nu_{k,m}\nu_m=b_{k,j}\nu_j+b_{k,m}\nu_m} \\
\end{array}%
\right.
\end{equation}
with respect to the variables
$$
a^\nu_{i,j}, a^\nu_{i,m}, a^\nu_{k,j}, a^\nu_{k,m}
$$
has a solution for any $\nu\in\mathcal{A}$ if even $a^\nu_{i,j}=a^\nu_{k,m}$ and $b_{i,j}\neq b_{k,m}$.
Therefore, by the condition of Item 2) of Theorem \ref{4.1}, the system of linear equations
\ref{(31)} has a solution for any $\nu\in\mathcal{A}$ if even $a^\nu_{i,j}=a^\nu_{k,m}$ and $b_{i,j}\neq b_{k,m}$.

Items 3) and 4) of Theorem \ref{4.1} are proved similar to the proof of  Item 2). The proof is complete.
\end{proof}

\section{Local derivations of five-dimensional  naturally graded
associative algebra with characteristic sequence $C(A)=(3,2)$}

Now we give a description of derivations and local derivations of the five-dimensional  naturally graded
associative algebras $\pi_2$ and $\pi_3$ with characteristic sequence $C(\mathcal{A})=(3,2)$.

We obtain the following theorem by the definition of a derivation.

\begin{theorem} \label{Der_pi_2}
A linear operator on algebra $\pi_2$ is a derivation if and only if its matrix has the following form
\begin{equation}\label{Der_pi_2_matrix}
\left(
  \begin{array}{ccccc}
    a_{1,1} & 0 & 0 & 0 & 0 \\
    a_{2,1} & 2a_{1,1} & 0 & 0 & 0 \\
    a_{3,1} & 2a_{2,1} & 3a_{1,1} & a_{3,4} & 0 \\
    a_{4,1} & 0 & 0 & a_{4,1}+a_{1,1} & 0 \\
    a_{5,1} & 2a_{4,1}  & 0 & a_{5,4} & 2(a_{4,1}+a_{1,1}) \\
  \end{array}
\right)
\end{equation}
\end{theorem}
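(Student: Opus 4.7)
The plan is to apply the Leibniz rule $D(xy)=D(x)y+xD(y)$ to the defining products of $\pi_2$ and read off the constraints on the entries of the matrix of $D$. Both directions (``if'' and ``only if'') reduce to verifying the same finite list of identities on basis elements, so once the constraints are extracted the converse is immediate by linearity.

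Write $D(e_i)=\sum_{j=1}^{5} a_{j,i}e_j$ so that the matrix of $D$ is $(a_{j,i})_{j,i=1}^{5}$. I will process the four nonzero defining products in order. First, applying $D$ to $e_1e_1=e_2$ gives
\[
D(e_2)=D(e_1)e_1+e_1D(e_1)=2a_{1,1}e_2+2a_{2,1}e_3+2a_{4,1}e_5,
\]
where I used that the only nonzero products of the form $e_je_1$ and $e_1e_j$ are $e_1e_1=e_2$, $e_2e_1=e_1e_2=e_3$, $e_4e_1=e_1e_4=e_5$. This fixes the second column of the matrix. Next, $e_1e_2=e_3$ and $e_2e_1=e_3$ both yield $D(e_3)=3a_{1,1}e_3$, and agreement between the two (using the already determined $D(e_2)$) is automatic, so the third column is fixed. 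Then, applying $D$ separately to $e_1e_4=e_5$, $e_4e_1=e_5$, and $e_4e_4=e_5$ gives three expressions for $D(e_5)$; comparing them yields $a_{1,4}=a_{2,4}=0$, leaves $a_{3,4}$ unconstrained, and forces the relation $a_{4,4}=a_{1,1}+a_{4,1}$ together with $a_{5,5}=2(a_{1,1}+a_{4,1})$, $a_{5,2}=2a_{4,1}$, while $a_{5,4}$ remains free. This produces exactly the matrix (\ref{Der_pi_2_matrix}).

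To complete the ``only if'' part I would then check that the Leibniz identity applied to every pair $(e_i,e_j)$ with $e_ie_j=0$ imposes no further constraint. These pairs split into two types: (a) pairs involving $e_3$ or $e_5$, for which both $D(e_k)e_\ell$ and $e_kD(e_\ell)$ vanish because $e_3$ and $e_5$ annihilate all basis elements (they lie in the socle); and (b) the pair $(e_2,e_4)$ and $(e_4,e_2)$, where a short computation shows the only residual term is a multiple of $a_{1,4}$ or $a_{2,4}$, both of which were already forced to zero. So no new conditions arise.

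For the converse, it suffices to verify that the linear map $D$ defined by (\ref{Der_pi_2_matrix}) satisfies $D(e_ie_j)=D(e_i)e_j+e_iD(e_j)$ on every pair of basis elements; this is a finite list of identities, and each one reduces to an arithmetic check matching the coefficients extracted above. I expect no conceptual obstacle; the only place demanding care is the three-way consistency among the relations coming from $e_1e_4=e_4e_1=e_4e_4=e_5$, since this is where the nontrivial constraint $a_{4,4}=a_{1,1}+a_{4,1}$ is produced, and where one must simultaneously compare the $e_2$, $e_3$, and $e_5$ components of $D(e_5)$ obtained from the three routes.
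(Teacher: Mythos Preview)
Your proposal is correct and follows exactly the approach the paper indicates: the paper gives no detailed proof of this theorem, stating only that it is ``obtained by the definition of a derivation,'' which is precisely the direct Leibniz-rule computation on basis products that you carry out. One minor narrative slip: the constraint $a_{5,2}=2a_{4,1}$ actually comes from the expression for $D(e_2)$ obtained in your first step, not from the comparison of the three routes to $D(e_5)$, but the value is correct and the overall argument is sound.
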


\begin{theorem} \label{LocDer_pi_2}
A linear operator on algebra $\pi_2$ is a local derivation if and only if its matrix has the following form
\[
\left(
  \begin{array}{ccccc}
    b_{11} & 0 & 0 & 0 & 0 \\
    b_{21} & b_{22} & 0 & 0 & 0 \\
    b_{31} & b_{32} & b_{33} & b_{34} & 0 \\
    b_{41} & 0 & 0 &   b_{41}+b_{11} & 0 \\
    b_{51} & b_{52} & 0 & b_{54} & b_{22}+b_{52} \\
  \end{array}
\right).
\]
\end{theorem}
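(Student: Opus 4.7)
The plan is to mimic the proof of Theorem \ref{3.2}, adapting it from automorphisms to derivations. Given a local derivation $\nabla$ with matrix $B = (b_{ij})_{i,j=1}^5$, the definition supplies, for every $\nu \in \pi_2$, a derivation $D_\nu$ with matrix $A_\nu = (a_{ij}^\nu)_{i,j=1}^5$ of the form (\ref{Der_pi_2_matrix}) satisfying $B\bar{\nu} = A_\nu \bar{\nu}$.

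For necessity, I would first evaluate at $\nu = e_j$ for $j = 1, \dots, 5$ to read off $b_{ij} = a_{ij}^{e_j}$. By (\ref{Der_pi_2_matrix}) this already produces the zero pattern of the target matrix and the provisional identifications $b_{44} = a_{41}^{e_4} + a_{11}^{e_4}$ and $b_{55} = 2(a_{41}^{e_5} + a_{11}^{e_5})$. The two nontrivial identities $b_{44} = b_{11} + b_{41}$ and $b_{55} = b_{22} + b_{52}$ are forced by testing $\nabla$ on mixed elements. Concretely, for $\nu = e_1 - e_4$ the first coordinate of $B\bar{\nu} = A_\nu\bar{\nu}$ gives $a_{11}^\nu = b_{11}$, and then the fourth coordinate reads $(1 + (-1))a_{41}^\nu + (-1)a_{11}^\nu = b_{41} - b_{44}$, which rearranges to $b_{44} = b_{11} + b_{41}$. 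Likewise, for $\nu = e_2 - e_5$ the second coordinate gives $a_{11}^\nu = b_{22}/2$, and the fifth coordinate then yields $b_{55} = b_{22} + b_{52}$.

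For sufficiency, given $B$ of the stated form, I need to show that for every $\nu$ the linear system $B\bar{\nu} = A_\nu \bar{\nu}$ admits a solution in the seven free parameters $a_{11}^\nu,\, a_{21}^\nu,\, a_{31}^\nu,\, a_{34}^\nu,\, a_{41}^\nu,\, a_{51}^\nu,\, a_{54}^\nu$ of a derivation. I would proceed by a case split mirroring the one in Theorem \ref{3.2}: $\nu_1 \neq 0$; $\nu_1 = 0$, $\nu_4 \neq 0$; $\nu_1 = \nu_4 = 0$, $\nu_2 \neq 0$; $\nu_1 = \nu_4 = \nu_2 = 0$, $\nu_3 \neq 0$; and the trivial case supported only on $e_5$. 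A notable simplification compared to Theorem \ref{3.2} is that the derivation entries depend \emph{linearly} on the parameters $a_{ij}^\nu$, so at each step one solves a triangular linear system rather than a quadratic one and no square-root branch choices arise. The two identities $b_{44} = b_{11} + b_{41}$ and $b_{55} = b_{22} + b_{52}$ are precisely what make the system consistent in the degenerate subcases $\nu_1 + \nu_4 = 0$ and $\nu_2 + \nu_5 = 0$, where the coefficients of $a_{41}^\nu$ and $a_{41}^\nu$ in the fourth and fifth equations vanish.

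The main obstacle is the bookkeeping in the degenerate subcases of the sufficiency direction: one must verify that whenever the coefficient of some $a_{ij}^\nu$ on the left-hand side becomes zero, the corresponding right-hand side also vanishes as a consequence of the two linear constraints on $B$. Since the relevant side conditions are exactly $b_{44} - b_{11} - b_{41} = 0$ and $b_{55} - b_{22} - b_{52} = 0$, I expect each degenerate subcase to reduce directly to one of these identities, completing the proof.
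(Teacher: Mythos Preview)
Your proposal is correct and follows essentially the same route as the paper's proof: the same zero pattern is read off from basis vectors, the two constraints $b_{44}=b_{11}+b_{41}$ and $b_{55}=b_{22}+b_{52}$ are forced by exactly the degenerate tests $\nu_1+\nu_4=0$ and $\nu_2+\nu_5=0$, and sufficiency is established by the identical five-case split. The only cosmetic difference is that the paper opens by invoking the general principles of Theorem~\ref{4.1} to obtain the zero pattern and to record that the remaining off-diagonal entries are unconstrained, whereas you do this directly by evaluating at the $e_j$; both amount to the same computation.
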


\begin{proof}
Let $\nabla$ be a local derivation of $\pi_2$, and, let $B$ be the matrix of $\nabla$.
We will prove this theorem applying Theorem \ref{4.1}.
By Item 0) of Theorem \ref{4.1}, we have
\[
B=
\left(
  \begin{array}{ccccc}
    b_{1,1} & 0 & 0 & 0 & 0 \\
    b_{2,1} & b_{2,2} & 0 & 0 & 0 \\
    b_{3,1} & b_{3,2} & b_{3,3} & b_{3,4} & 0 \\
    b_{4,1} & 0 & 0 & b_{4,4} & 0 \\
    b_{5,1} & b_{5,2}  & 0 & b_{5,4} & b_{5,5} \\
  \end{array}
\right).
\]
Now, by Item 4) of Theorem \ref{4.1}, we have the elements
$b_{1,1}$, $b_{2,1}$, $b_{2,2}$, $b_{3,1}$, $b_{3,2}$ and $b_{3,3}$ are pairwise distinct. By Item 3) of Theorem \ref{4.1},
$b_{3,3}$, $b_{4,4}$, $b_{5,5}$ are pairwise distinct.
Similarly, $b_{4,1}$ and $b_{5,2}$ are mutually distinct. As a result, we have
the following system of linear equations
\begin{equation}\label{(5311)}
\left\{%
\begin{array}{lllll}
    {a^\nu_{1,1}\nu_1=b_{1,1}\nu_1} \\
    {a^\nu_{2,1}\nu_1+2a^\nu_{1,1}\nu_2=b_{2,1}\nu_1+b_{2,2}\nu_2} \\
    {a^\nu_{3,1}\nu_1+2a^\nu_{2,1}\nu_2+3a^\nu_{1,1}\nu_3+a^\nu_{3,4}\nu_4=b_{3,1}\nu_1+b_{3,2}\nu_2+b_{3,3}\nu_1+b_{3,4}\nu_4} \\
    {a^\nu_{4,1}\nu_1+(a^\nu_{4,1}+a^\nu_{1,1})\nu_4=b_{4,1}\nu_1+b_{4,4}\nu_4} \\
    {a^\nu_{5,1}\nu_1+2a^\nu_{4,1}\nu_2+a^\nu_{5,4}\nu_4+2(a^\nu_{4,1}+a^\nu_{1,1})\nu_5=b_{5,1}\nu_1+b_{5,2}\nu_2+b_{5,4}\nu_4+b_{5,5}\nu_5} \\
\end{array}%
\right.
\end{equation}

Similar to the proof of Theorem 7, we have the following suggestions:
if, for each element $ x\in\pi_2 $, there exists a matrix $A_x$ of the form (\ref{Der_pi_2_matrix}) such that
$$
B\bar{x}=A_x\bar{x},
$$
then the linear operator, defined by the matrix B is a local derivation. In other words, if, for each element $x\in\pi_2$, the system of linear equations
(\ref{(5)}) has a solution with respect to the variables
$$
a_{11}^\nu, a_{21}^\nu, a_{24}^\nu, a_{31}^\nu, a_{34}^\nu, a_{41}^\nu, a_{44}^\nu, a_{51}^\nu, a_{52}^\nu, a_{54}^\nu,
$$
then the linear operator, defined by the matrix $B$ is a local derivation. Note that, if the left part of any equation of the system (\ref{(5)}) is equal to zero, then the right part of this equation is also equal to zero.


1) Now, suppose that $\nu_1\neq 0.$ Then, from (\ref{(5)}) it follows that
\begin{equation}  \nonumber
\left\{%
\begin{array}{lllll}
    {\alpha_{11}^\nu=b_{11}} \\
    {\nu_1\alpha_{21}^\nu=\nu_1b_{21}+\nu_2b_{22}-\nu_22b_{11}}\\
    {\nu_1\alpha_{31}^\nu+2\nu_2\alpha_{21}^\nu+\nu_33b_{11}+\nu_4\alpha_{34}^\nu=\nu_1b_{31}+\nu_2b_{32}+\nu_3b_{33}+\nu_4b_{34}} \\
    {\nu_1\alpha_{41}^\nu+\nu_4(b_{11}+\alpha_{41}^\nu)=\nu_1b_{41}+\nu_4b_{44}} \\
    {\nu_1\alpha_{51}^\nu+\nu_2 2\alpha_{41}^\nu+\nu_4\alpha_{54}^\nu+\nu_52(b_{11}+\alpha_{41}^\nu)
    =\nu_1b_{51}+\nu_2b_{52}+\nu_4b_{54}+\nu_5b_{55}}
\end{array}%
\right.
\end{equation}

If $0\neq \nu_1=-\nu_4$, then, from
\[
\nu_1\alpha_{41}^\nu+\nu_4b_{11}+\nu_4\alpha_{41}^\nu=\nu_1b_{41}+\nu_4b_{44}
\]
it follows that
\[
(\nu_1+\nu_4)\alpha_{41}^\nu=\nu_1b_{41}+\nu_4b_{44}-\nu_4b_{11},
\]
i.e.,
\[
\nu_1b_{41}+\nu_4b_{44}-\nu_4b_{11}=0,
\]
Hence, $b_{41}-b_{44}+b_{11}=0$, i.e., $b_{44}=b_{41}+b_{11}$.
Therefore, the system of equations (\ref{(5311)}) we can rewrite in the following form:
\begin{small}
\begin{equation}\label{(56)}
\left\{%
\begin{array}{lllll}
    {a^\nu_{1,1}\nu_1=b_{1,1}\nu_1} \\
    {a^\nu_{2,1}\nu_1+2a^\nu_{1,1}\nu_2=b_{2,1}\nu_1+b_{2,2}\nu_2} \\
    {a^\nu_{3,1}\nu_1+2a^\nu_{2,1}\nu_2+3a^\nu_{1,1}\nu_3+a^\nu_{3,4}\nu_4=b_{3,1}\nu_1+b_{3,2}\nu_2+b_{3,3}\nu_3+b_{3,4}\nu_4} \\
    {a^\nu_{4,1}\nu_1+(a^\nu_{4,1}+a^\nu_{1,1})\nu_4=b_{4,1}\nu_1+(b_{41}+b_{11})\nu_4} \\
    {a^\nu_{5,1}\nu_1+2a^\nu_{4,1}\nu_2+a^\nu_{5,4}\nu_4+2(a^\nu_{4,1}+a^\nu_{1,1})\nu_5=b_{5,1}\nu_1+b_{5,2}\nu_2+b_{5,4}\nu_4+b_{5,5}\nu_5} \\
\end{array}%
\right.
\end{equation}
\end{small}
Now, if $\nu_1\neq -\nu_4$, then the system of equations (\ref{(56)}) has a solution.
Thus, we can find at least one solution of the system (\ref{(56)}) in the case $\nu_1\neq 0$.

	2) Now, suppose that $\nu_1=0 $ and $\nu_4\neq 0. $ Then we have
\begin{small}
\begin{equation}\label{(57)}
\left\{%
\begin{array}{llll}
    {2a^\nu_{1,1}\nu_2=b_{2,2}\nu_2} \\
    {2a^\nu_{2,1}\nu_2+3a^\nu_{1,1}\nu_3+a^\nu_{3,4}\nu_4=b_{3,2}\nu_2+b_{3,3}\nu_3+b_{3,4}\nu_4} \\
    {(a^\nu_{4,1}+a^\nu_{1,1})\nu_4=(b_{41}+b_{11})\nu_4} \\
    {2a^\nu_{4,1}\nu_2+a^\nu_{5,4}\nu_4+2(a^\nu_{4,1}+a^\nu_{1,1})\nu_5=b_{5,2}\nu_2+b_{5,4}\nu_4+b_{5,5}\nu_5} \\
\end{array}%
\right.
\end{equation}
\end{small}
If $\nu_2\neq 0$, then
\begin{small}
\begin{equation}\label{(58)}
\left\{%
\begin{array}{llll}
    {a^\nu_{1,1}=\frac{1}{2}b_{2,2}} \\
    {2a^\nu_{2,1}\nu_2+a^\nu_{3,4}\nu_4=-\frac{3}{2}b_{2,2}\nu_3+b_{3,2}\nu_2+b_{3,3}\nu_3+b_{3,4}\nu_4} \\
    {a^\nu_{4,1}=-\frac{3}{2}b_{2,2}+b_{41}+b_{11}} \\
    {a^\nu_{5,4}\nu_4=-2a^\nu_{4,1}\nu_2-2(a^\nu_{4,1}+a^\nu_{1,1})\nu_5+b_{5,2}\nu_2+b_{5,4}\nu_4+b_{5,5}\nu_5} \\
\end{array}%
\right.
\end{equation}
\end{small}
In the order indicated in the last system (\ref{(58)}) of equations we can find a solution of this system
if $\nu_1=0$, $\nu_2\neq 0$ and $\nu_4\neq 0$. Else, if $\nu_2=0$, then we have the following system of equations
\begin{small}
\begin{equation}\label{(59)}
\left\{%
\begin{array}{lll}
    {3a^\nu_{1,1}\nu_3+a^\nu_{3,4}\nu_4=b_{3,3}\nu_3+b_{3,4}\nu_4} \\
    {a^\nu_{4,1}+a^\nu_{1,1}=b_{41}+b_{11}} \\
    {a^\nu_{5,4}\nu_4=-2(b_{41}+b_{11})\nu_5+b_{5,4}\nu_4+b_{5,5}\nu_5} \\
\end{array}%
\right.
\end{equation}
\end{small}
Since $\nu_4\neq 0$, we have the system of equations (\ref{(59)}) has a solution.

	3) If $\nu_1=0,  \nu_4=0$ and $\nu_2\neq0,$ then we have
\begin{small}
\begin{equation}\label{(510)}
\left\{%
\begin{array}{lll}
    {a^\nu_{1,1}=\frac{1}{2}b_{2,2}} \\
    {2a^\nu_{2,1}\nu_2=-\frac{3}{2}b_{2,2}\nu_3+b_{3,2}\nu_2+b_{3,3}\nu_3} \\
    {2a^\nu_{4,1}\nu_2+2(a^\nu_{4,1}+\frac{1}{2}b_{2,2})\nu_5=b_{5,2}\nu_2+b_{5,5}\nu_5} \\
\end{array}%
\right.
\end{equation}
\end{small}
This system we can rewrite in the following form
\begin{small}
\begin{equation}\label{(5101)}
\left\{%
\begin{array}{llll}
    {a^\nu_{1,1}=\frac{1}{2}b_{2,2}} \\
    {2a^\nu_{2,1}\nu_2=-\frac{3}{2}b_{2,2}\nu_3+b_{3,2}\nu_2+b_{3,3}\nu_3} \\
    {2(a^\nu_{4,1}+\frac{1}{2}b_{2,2})(\nu_2+\nu_5)=b_{2,2}\nu_2+b_{5,2}\nu_2+b_{5,5}\nu_5} \\
\end{array}%
\right.
\end{equation}
\end{small}
If $\nu_2+\nu_5=0$, then $b_{2,2}\nu_2+b_{5,2}\nu_2-b_{5,5}\nu_2=0$, i.e., $b_{55}=b_{22}+b_{52}$.
Only in this case the system \ref{(510)} will have a solution. So, the system \ref{(56)} we replace by the following system:
\begin{small}
\begin{equation}\label{(511)}
\left\{%
\begin{array}{llllll}
    {a^\nu_{1,1}\nu_1=b_{1,1}\nu_1} \\
    {a^\nu_{2,1}\nu_1+2a^\nu_{1,1}\nu_2=b_{2,1}\nu_1+b_{2,2}\nu_2} \\
    {a^\nu_{3,1}\nu_1+2a^\nu_{2,1}\nu_2+3a^\nu_{1,1}\nu_3+a^\nu_{3,4}\nu_4=b_{3,1}\nu_1+b_{3,2}\nu_2+b_{3,3}\nu_3+b_{3,4}\nu_4} \\
    {a^\nu_{4,1}\nu_1+(a^\nu_{4,1}+a^\nu_{1,1})\nu_4=b_{4,1}\nu_1+(b_{41}+b_{11})\nu_4} \\
    {a^\nu_{5,1}\nu_1+2a^\nu_{4,1}\nu_2+a^\nu_{5,4}\nu_4+2(a^\nu_{4,1}+a^\nu_{1,1})\nu_5=b_{5,1}\nu_1+b_{5,2}\nu_2+b_{5,4}\nu_4+(b_{22}+b_{52})\nu_5} \\
\end{array}%
\right.
\end{equation}
\end{small}
Thus, the system \ref{(511)} has a solution in the case $\nu_1=0,  \nu_4=0$ and $\nu_2\neq 0$, and $\nu_2+\nu_5=0$.
Else, if $\nu_2+\nu_5\neq 0$, then the system \ref{(510)}, and, hence, the system \ref{(511)} has a solution.
So, in the case $\nu_1=0,  \nu_4=0$ and $\nu_2\neq 0$ the system \ref{(511)} has a solution.

	4) In the case $\nu_1=0,  \nu_4=0, \nu_2=0$ and $\nu_3\neq0$ we have
\begin{small}
\begin{equation}\label{(512)}
\left\{%
\begin{array}{ll}
    {3a^\nu_{1,1}\nu_3=b_{3,3}\nu_3} \\
    {2(a^\nu_{4,1}+a^\nu_{1,1})\nu_5=b_{5,5}\nu_5} \\
\end{array}%
\right..
\end{equation}
\end{small}
From (\ref{(512)}) it follows that
\begin{small}
\begin{equation}\label{(513)}
\left\{%
\begin{array}{lllll}
    {a^\nu_{1,1}=\frac{1}{3}b_{3,3}} \\
    {2(a^\nu_{4,1}+\frac{1}{3}b_{3,3})\nu_5=b_{5,5}\nu_5} \\
\end{array}%
\right..
\end{equation}
\end{small}
Clearly, the system of equations \ref{(513)} has a solution for any complex number $\nu_5$.
Hence, in this case the system of equations \ref{(511)} also has a solution.

5) The existence of a solution of \ref{(511)} in the case $\nu_1=0,  \nu_4=0, \nu_2=0, \nu_3=0$ and $\nu_5\in {\Bbb C}$ is obvious.
Thus, the system of linear equations \ref{(511)} always has a solution, i.e., the linear operator, generated by the matrix (\ref{pi_22}) is a local derivation.
The proof is complete.
\end{proof}

The following theorem is also obtained by the definition of a derivation.

\begin{theorem} \label{Der_pi_3}
A linear operator on algebra $\pi_3$ is a derivation if and only if its matrix has the following form
\[
\left(
  \begin{array}{ccccc}
    a_{1,1} & 0 & 0 & 0 & 0 \\
    a_{2,1} & 2a_{1,1} & 0 & 0 & 0 \\
    a_{3,1} & 2a_{2,1} & 3a_{1,1} & a_{3,4} & 0 \\
    0 & 0 & 0 & a_{1,1} & 0 \\
    a_{5,1} & 0  & 0 & a_{5,4} & 2a_{1,1} \\
  \end{array}
\right)
\]
\end{theorem}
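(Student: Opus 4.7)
The plan is to write $D(e_j)=\sum_i a_{i,j}e_i$ and derive the column-by-column structure of the matrix by applying the Leibniz rule $D(xy)=D(x)y+xD(y)$ to each defining relation of $\pi_3$, together with a carefully chosen zero product. Since $\pi_3$ has only five nonzero basis products and the multiplication table is very sparse, the derivation conditions reduce to a small, tractable list of linear equations in the $a_{i,j}$.

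First, I would apply $D$ to $e_1e_1=e_2$. Using $e_i e_1=0$ for $i\ge 2$ (in $\pi_3$ only $e_1 e_1=e_2$ and $e_2 e_1=e_3$ among products with $e_1$ on the right are nonzero), I obtain $D(e_2)=2a_{1,1}e_2+2a_{2,1}e_3+a_{4,1}e_5$, which fixes column 2 in terms of column 1 up to the entry $a_{5,2}=a_{4,1}$. Next, applying $D$ to $e_1e_2=e_3$ yields $D(e_3)=3a_{1,1}e_3$ (after using the previous relations), giving column 3. Then I compute $D(e_5)$ two ways: from $e_1e_4=e_5$ and from $e_4e_4=e_5$. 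The first gives $D(e_5)=a_{1,4}e_2+a_{2,4}e_3+(a_{1,1}+a_{4,1}+a_{4,4})e_5$ and the second gives $D(e_5)=(a_{1,4}+2a_{4,4})e_5$. Comparing forces $a_{1,4}=a_{2,4}=0$ and $a_{4,4}=a_{1,1}+a_{4,1}$, and fixes the fifth column up to free parameters $a_{5,1},a_{5,4}$ and up to $a_{4,1}$.

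The step that singles out $\pi_3$ from $\pi_2$ is the exploitation of a zero product. Since $\pi_3$ is non-commutative in the pair $e_1,e_4$ (only $e_1e_4=e_5$, while $e_4e_1=0$), applying $D$ to $e_4e_1=0$ gives $D(e_4)e_1+e_4 D(e_1)=0$. The first summand vanishes (every $e_i e_1$ with $i\ne 1,2$ is zero and $a_{1,4}=a_{2,4}=0$ by the previous step), while $e_4 D(e_1)=a_{4,1}(e_4e_4)=a_{4,1}e_5$. This forces $a_{4,1}=0$, which in turn collapses $a_{5,2}=0$ and $a_{4,4}=a_{1,1}$, exactly as claimed. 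I would then quickly check that the remaining zero products (e.g.\ $e_2e_4=0$, $e_4e_2=0$, $e_1e_3=0$, etc.) impose no further conditions, by substituting the currently known form and observing that every term vanishes identically.

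The converse direction is a straightforward verification: for any choice of the six free parameters $a_{1,1},a_{2,1},a_{3,1},a_{3,4},a_{5,1},a_{5,4}$, I would check the five nonzero product relations and confirm that the corresponding linear map satisfies the Leibniz rule on all pairs of basis vectors. I expect no genuine obstacle — the computation is routine linear algebra — but the one subtle point is recognizing that the zero relation $e_4e_1=0$ (and not any of the nonzero relations) is what eliminates $a_{4,1}$; overlooking it would leave an extra parameter and produce the \emph{wrong} derivation space, effectively the one for $\pi_2$ from Theorem \ref{Der_pi_2}.
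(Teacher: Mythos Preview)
Your proposal is correct and is precisely the direct verification the paper has in mind: the paper offers no proof beyond the single sentence ``The following theorem is also obtained by the definition of a derivation,'' and your column-by-column application of the Leibniz rule to the basis products (including the crucial use of the zero relation $e_4e_1=0$ to kill $a_{4,1}$) is exactly that computation carried out in full. One minor wording slip: you write ``$e_i e_1=0$ for $i\ge 2$'' and then immediately (and correctly) contradict this with the parenthetical noting $e_2e_1=e_3$; your actual computation of $D(e_1)e_1$ uses the right table, so this does not affect the argument.
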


\begin{theorem} \label{LocDer_pi_3}
A linear operator on algebra $\pi_3$ is a local derivation if and only if its matrix has the following form
\[
\left(
  \begin{array}{ccccc}
 b_{1,1} & 0 & 0 & 0 & 0 \\
    b_{2,1} & 2b_{1,1} & 0 & 0 & 0 \\
    b_{3,1} & b_{3,2} & 3b_{1,1} & b_{3,4} & 0 \\
    0 & 0 & 0 & b_{1,1} & 0 \\
    b_{5,1} & 0  & 0 & b_{5,4} & 2b_{1,1} \\
  \end{array}
\right)
\]
\end{theorem}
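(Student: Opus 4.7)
The plan is to prove the equivalence in two directions, following the template of the proof of Theorem~\ref{LocDer_pi_2}. For any $\nu\in\pi_3$, the local derivation condition $\nabla(\nu)=D_\nu(\nu)$ reads $B\bar\nu=A_\nu\bar\nu$, where $A_\nu$ has the derivation form of Theorem~\ref{Der_pi_3}.

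For the necessity direction, testing $\nu=e_i$ for $i=1,\dots,5$ and comparing column~$i$ of $B$ with column~$i$ of $A_{e_i}$ forces every zero-position of the derivation matrix to be zero in $B$ as well (this is Item~0) of Theorem~\ref{4.1}); in particular all entries above the main diagonal together with $b_{4,1},b_{4,2},b_{4,3},b_{4,5},b_{5,2},b_{5,3}$ vanish. To pin down the diagonal ratios, observe that in $A_\nu$ the whole diagonal is governed by the single parameter $a_{1,1}^\nu$ with multiplicities $(c_1,\dots,c_5)=(1,2,3,1,2)$. For any pair $(i,j)$ with $b_{i,j}=b_{j,i}=0$, evaluating the local derivation condition at $\nu=e_i+e_j$ and reading off rows $i$ and $j$ of $B\bar\nu=A_\nu\bar\nu$ yields $b_{i,i}=c_i a_{1,1}^\nu$ and $b_{j,j}=c_j a_{1,1}^\nu$, hence $c_j b_{i,i}=c_i b_{j,j}$. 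Running this through the admissible pairs $(1,4)$, $(2,4)$, $(2,5)$, $(3,5)$ produces $b_{4,4}=b_{1,1}$, $b_{2,2}=2b_{4,4}$, $b_{5,5}=b_{2,2}$, and $2b_{3,3}=3b_{5,5}$, which together force the diagonal of $B$ into $(b_{1,1},2b_{1,1},3b_{1,1},b_{1,1},2b_{1,1})$. The remaining entries $b_{2,1},b_{3,1},b_{3,2},b_{3,4},b_{5,1},b_{5,4}$ carry no further constraint, consistent with Items~1), 2), 3), 4) of Theorem~\ref{4.1}.

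For the sufficiency direction, I would show that for every $\nu$ the linear system $B\bar\nu=A_\nu\bar\nu$ admits a solution in the six parameters $a_{1,1}^\nu,a_{2,1}^\nu,a_{3,1}^\nu,a_{3,4}^\nu,a_{5,1}^\nu,a_{5,4}^\nu$ by case analysis on the support of $\nu$, parallel to the five-case breakdown in the proof of Theorem~\ref{LocDer_pi_2}: (i) if $\nu_1\neq 0$, set $a_{1,1}^\nu=b_{1,1}$, $a_{2,1}^\nu=b_{2,1}$, $a_{3,4}^\nu=b_{3,4}$, $a_{5,4}^\nu=b_{5,4}$, then solve rows~3 and~5 for $a_{3,1}^\nu$ and $a_{5,1}^\nu$; (ii) if $\nu_1=0$ and $\nu_4\neq 0$, row~4 forces $a_{1,1}^\nu=b_{1,1}$ and row~3 lets $a_{2,1}^\nu$ absorb the $b_{3,2}$ term when $\nu_2\neq 0$; (iii) if $\nu_1=\nu_4=0$ and $\nu_2\neq 0$, rows~2 and~3 give $a_{1,1}^\nu=b_{1,1}$ and $a_{2,1}^\nu=b_{3,2}/2$, while row~5 is automatic because $b_{5,5}=2b_{1,1}$; (iv)--(v) the cases in which only $\nu_3$ or only $\nu_5$ is nonzero are immediate from $b_{3,3}=3b_{1,1}$ and $b_{5,5}=2b_{1,1}$. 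The main obstacle is the proportionality step of the necessity direction: Items~1), 2), 3), 4) of Theorem~\ref{4.1} only extract equalities and pairwise distinctness of entries, not the rational ratios $1:2:3:1:2$, so the key leverage has to come from the direct additivity argument at $\nu=e_i+e_j$ together with the fact that a single scalar $a_{1,1}^\nu$ controls every diagonal entry of a derivation of $\pi_3$; once this is locked in, the sufficiency analysis is essentially bookkeeping, the only new feature being that the genuinely free $b_{3,2}$ (which need not equal $2b_{2,1}$) is absorbed by the off-diagonal $a_{2,1}^\nu$ in the cases where $\nu_1=0$.
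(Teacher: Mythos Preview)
Your proposal is correct and follows essentially the same route as the paper, which for this theorem merely says ``Similar to the proof of Theorem~\ref{LocDer_pi_2} we can prove this theorem applying Theorem~\ref{4.1}.'' In fact you are more explicit than the paper: you correctly observe that Theorem~\ref{4.1} alone does not pin down the diagonal ratios $1{:}2{:}3{:}1{:}2$, and you supply the missing additivity argument at $\nu=e_i+e_j$ for the pairs $(1,4),(2,4),(2,5),(3,5)$, each of which indeed has vanishing off-diagonal entries both in $B$ and in the derivation matrix, so the two relevant rows of $B\bar\nu=A_\nu\bar\nu$ read simply $b_{i,i}=c_i a_{1,1}^\nu$ and $b_{j,j}=c_j a_{1,1}^\nu$. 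Your sufficiency case split is also sound; the only cosmetic point is that in case~(ii) with $\nu_2=0$ you should remark that $a_{3,4}^\nu$ alone absorbs row~3 since $\nu_4\neq 0$, which you implicitly use but do not state.
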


\begin{proof}
Similar to the proof of Theorem \ref{LocDer_pi_2} we can prove this theorem applying Theorem \ref{4.1}.
\end{proof}

\section{The local derivations of $\pi_2$ and $\pi_2$ form a Lie algebra}

\begin{theorem} \label{6.2}
The vector space $LocDer(\pi_3)$ of all local derivations of the algebra $\pi_3$ forms a Lie algebra with respect
to the Lie multiplication $[\nabla,\Delta]=\nabla\Delta-\Delta\nabla$.
\end{theorem}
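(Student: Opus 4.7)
The plan is to verify that $LocDer(\pi_3)$ is closed under the bracket $[\nabla,\Delta] = \nabla\Delta - \Delta\nabla$. Antisymmetry and the Jacobi identity hold automatically for commutators in $gl(5,\mathbb{C})$, so closure is all that remains to establish the Lie algebra structure on the (already linear) space $LocDer(\pi_3)$. By Theorem \ref{LocDer_pi_3}, an operator on $\pi_3$ is a local derivation if and only if it is represented by a matrix
\[
N = \left(\begin{array}{ccccc}
 b_{1,1} & 0 & 0 & 0 & 0 \\
 b_{2,1} & 2b_{1,1} & 0 & 0 & 0 \\
 b_{3,1} & b_{3,2} & 3b_{1,1} & b_{3,4} & 0 \\
 0 & 0 & 0 & b_{1,1} & 0 \\
 b_{5,1} & 0 & 0 & b_{5,4} & 2b_{1,1}
\end{array}\right),
\]
determined by the seven free scalars $b_{1,1}, b_{2,1}, b_{3,1}, b_{3,2}, b_{3,4}, b_{5,1}, b_{5,4}$.

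I would take two such matrices $N_1 = (b_{i,j})$ and $N_2 = (c_{i,j})$ and compute the commutator $C = N_1 N_2 - N_2 N_1$ directly. The supports of $N_1$ and $N_2$ both lie in the union of the lower triangle and the single upper position $(3,4)$, so a short case check on each coordinate where Theorem \ref{LocDer_pi_3} forces a zero confirms that the corresponding entry of both products $N_1 N_2$ and $N_2 N_1$ is again zero. Above the main diagonal the only potentially nonzero entry in either product is at $(3,4)$, produced by pairings involving row $3$ and column $4$; every other position above the diagonal vanishes automatically because rows $1,2,4$ of each factor have no entries strictly to the right of the diagonal and column $5$ of each factor has no entries strictly above the diagonal.

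The key observation is that the diagonal of $C$ vanishes identically. The diagonals of $N_1$ and $N_2$ are scalar multiples of $\mathrm{diag}(1,2,3,1,2)$, and when one traces through $(N_1 N_2)_{i,i} = \sum_j (N_1)_{i,j}(N_2)_{j,i}$ for each $i$, the supports of $N_1, N_2$ allow only the $j=i$ term to survive (for $i=3$ the term $b_{3,4}(N_2)_{4,3}$ is killed by $(N_2)_{4,3}=0$, and similarly for all other would-be contributions). Thus $(N_1 N_2)_{i,i} = (N_2 N_1)_{i,i}$ reduces to the same scalar product of the two diagonal entries, giving $C_{i,i} = 0$ for all $i$. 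This trivially satisfies the diagonal pattern $(\beta,2\beta,3\beta,\beta,2\beta)$ of Theorem \ref{LocDer_pi_3} with $\beta = 0$.

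Since the zero positions are preserved and the diagonal constraint is automatically satisfied with $\beta = 0$, the matrix $C$ fits the form of Theorem \ref{LocDer_pi_3} and hence $C \in LocDer(\pi_3)$. The main technical point needing care is the upper-triangular entry at $(3,4)$: one must check that cross terms involving $b_{3,4}$ and $c_{3,4}$ land only at $(3,4)$ and produce an admissible value. A direct computation yields $C_{3,4} = 2(b_{1,1} c_{3,4} - b_{3,4} c_{1,1})$, which is indeed an allowed scalar. This establishes closure under the bracket, and hence $(LocDer(\pi_3), [\,\cdot\,,\,\cdot\,])$ is a Lie algebra.
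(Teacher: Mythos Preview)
Your proposal is correct and follows essentially the same approach as the paper: both verify closure of $LocDer(\pi_3)$ under the commutator by appealing to the matrix characterization of Theorem~\ref{LocDer_pi_3} and checking that $[N_1,N_2]$ again has the required shape with vanishing diagonal. The only difference is presentational: the paper simply writes out the full commutator matrix explicitly, whereas you argue more structurally about why each forbidden entry vanishes and compute only the $(3,4)$ entry by hand; your value $C_{3,4}=2(b_{1,1}c_{3,4}-b_{3,4}c_{1,1})$ agrees with the paper's $2x_1y_2-2x_2y_1$.
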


\begin{proof}
Let $\nabla$, $\Delta$ be local derivations of algebra $\pi_3$, and, let $A$, $B$ their matrices respectively and
\[
A=
\left(
  \begin{array}{ccccc}
    x_{1} & 0 & 0 & 0 & 0 \\
    x_{4} & 2x_{1} & 0 & 0 & 0 \\
    x_{6} & x_{5} & 3x_{1} & x_{2} & 0 \\
    0 & 0 & 0 &   x_{1} & 0 \\
    x_{7} & 0 & 0 & x_{3} & 2x_{1} \\
  \end{array}
\right),
B=
\left(
  \begin{array}{ccccc}
    y_{1} & 0 & 0 & 0 & 0 \\
    y_{4} & 2y_{1} & 0 & 0 & 0 \\
    y_{6} & y_{5} & 3y_{1} & y_{2} & 0 \\
    0 & 0 & 0 &   y_{1} & 0 \\
    y_{7} & 0 & 0 & y_{3} & 2y_{1} \\
  \end{array}
\right).
\]
We compute the Lie product of $A$ and $B$.
\[
[A,B]=AB-BA=
\]
\[
\left(
  \begin{array}{ccccc}
    0 & 0 & 0 & 0 & 0 \\
    x_{1}y_{4}-x_{4}y_{1} & 0 & 0 & 0 & 0 \\
    2x_{1}y_{6}+x_{5}y_{4}-x_{4}y_{5}-2x_{6}y_{1} & x_{1}y_{5}-x_{5}y_{1} & 0 & 2x_{1}y_{2}-2x_{2}y_{1} & 0 \\
    0 & 0 & 0 &   0 & 0 \\
    x_{1}y_{7}-x_{7}y_{1} & 0 & 0 & x_{1}y_{3}-x_{3}y_{1} & 0 \\
  \end{array}
\right).
\]
Clearly, $[A,B]$ is the matrix of $[\nabla,\Delta]$ and is of the matrix form \ref{(2)}.
Hence, $[\nabla,\Delta]$ is a local derivation. Since  $\nabla$, $\Delta$  are arbitrarily chosen, we have
the vector space $LocDer(\pi_3)$ is a Lie algebra with respect to the Lie multiplication $[\nabla,\Delta]=\nabla\Delta-\Delta\nabla$.
\end{proof}

\begin{theorem}  \label{6.1}
The vector space $LocDer(\pi_2)$ of all local derivations of the algebra $\pi_2$ forms a Lie algebra with respect
to the Lie multiplication $[\nabla,\Delta]=\nabla\Delta-\Delta\nabla$.
\end{theorem}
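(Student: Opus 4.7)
The approach mirrors the proof of Theorem~\ref{6.2}. I would take two arbitrary local derivations $\nabla,\Delta\in LocDer(\pi_2)$, let $A=(a_{ij})$ and $B=(b_{ij})$ be their matrices in the form described by Theorem~\ref{LocDer_pi_2}, and directly compute the commutator $[A,B]=AB-BA$. The goal is to verify that $[A,B]$ again lies in the local-derivation matrix form for $\pi_2$; since the matrix commutator is automatically bilinear and satisfies the Jacobi identity, closure of $LocDer(\pi_2)$ under $[\,\cdot\,,\,\cdot\,]$ is the only thing to check.

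First I would confirm that every one of the twelve zero positions in the form, namely $(1,2),(1,3),(1,4),(1,5),(2,3),(2,4),(2,5),(3,5),(4,2),(4,3),(4,5),(5,3)$, remains zero in $[A,B]$. This is immediate from the shared lower-triangular-block sparsity pattern: for each such position the sum $\sum_k a_{ik}b_{kj}$ has every summand involving at least one zero factor of $A$ or $B$, and the same holds for $BA$.

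Second, and the substantive part of the proof, I would verify the two linear constraints that define the form: that the $(4,4)$-entry equals the sum of the $(4,1)$- and $(1,1)$-entries, and that the $(5,5)$-entry equals the sum of the $(2,2)$- and $(5,2)$-entries. The key observation is that in the commutator each of the six entries at positions $(1,1),(2,2),(4,1),(4,4),(5,2),(5,5)$ collapses to zero. For example, $(AB)_{44}=(a_{41}+a_{11})(b_{41}+b_{11})=(BA)_{44}$ since scalars commute, and a short term-by-term expansion gives $(AB)_{41}=a_{41}b_{11}+(a_{41}+a_{11})b_{41}$ which matches $(BA)_{41}=b_{41}a_{11}+(b_{41}+b_{11})a_{41}$ pair-by-pair; analogous cancellations handle $(1,1),(2,2),(5,2),(5,5)$. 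Consequently both defining constraints reduce to the trivial identity $0=0+0$, so $[A,B]$ has the matrix form of Theorem~\ref{LocDer_pi_2}.

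The expected main obstacle is purely organizational rather than conceptual, namely that the $5\times 5$ commutator has roughly a dozen generically nonzero entries (distributed along the first column, second column, fourth column, and the third row) that must be written out and the six vanishing identities above verified without arithmetic slips. Once this is done, $[A,B]\in LocDer(\pi_2)$, and since $\nabla,\Delta$ were arbitrary, the vector space $LocDer(\pi_2)$ is closed under the Lie bracket and hence forms a Lie algebra.
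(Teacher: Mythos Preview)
Your proposal is correct and follows exactly the approach the paper indicates: the paper does not give an explicit proof of Theorem~\ref{6.1} but simply states it is proved analogously to Theorem~\ref{6.2}, and you carry out precisely that analogous matrix-commutator computation for the $\pi_2$ local-derivation form. Your additional observation that the six entries at positions $(1,1),(2,2),(4,1),(4,4),(5,2),(5,5)$ all vanish in $[A,B]$, reducing the two linear constraints to $0=0+0$, is the right way to handle the extra complexity of $\pi_2$ over $\pi_3$, and your cancellation checks are correct.
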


Theorem \ref{6.1} is proved similar to the proof of the following theorem.

\section{The relationship between local derivations and local automorphisms of $\pi_2$ and $\pi_2$}

\begin{proposition} \cite[Lemma 4]{EZLS}
Local automorphisms of an arbitrary finite-dimensional algebra $\mathcal{A}$ form a group under multiplication.
\end{proposition}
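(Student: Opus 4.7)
The plan is to verify the three group axioms for $LocAut(\mathcal{A})$ under composition: presence of the identity, closure under composition, and existence of inverses that are again local automorphisms. The identity map is trivially a local automorphism, so the content lies in the other two axioms. The hypothesis that $\mathcal{A}$ is finite-dimensional will enter only to promote injectivity to bijectivity.

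First I would show that every local automorphism $\Phi$ is a bijection. If $\Phi(x)=0$, then by the definition of a local automorphism there is an automorphism $\varphi_x$ with $\varphi_x(x)=\Phi(x)=0$; since $\varphi_x$ is injective, $x=0$. Hence $\Phi$ is an injective linear endomorphism of the finite-dimensional space $\mathcal{A}$, and is therefore invertible. This is the step that genuinely uses $\dim \mathcal{A}<\infty$, and it is what makes it legitimate to even speak of $\Phi^{-1}$ as a linear map on $\mathcal{A}$.

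Next I would verify closure under composition. Given local automorphisms $\Phi, \Psi$ and an arbitrary $x\in\mathcal{A}$, choose $\psi_x\in Aut(\mathcal{A})$ with $\Psi(x)=\psi_x(x)$, set $y:=\Psi(x)$, and then choose $\varphi_y\in Aut(\mathcal{A})$ with $\Phi(y)=\varphi_y(y)$. Then
\[
(\Phi\circ\Psi)(x)=\Phi(\psi_x(x))=\varphi_y(\psi_x(x))=(\varphi_y\circ\psi_x)(x),
\]
and $\varphi_y\circ\psi_x$ is an automorphism, so $\Phi\circ\Psi$ is a local automorphism. For the inverse, given $y\in\mathcal{A}$ set $x:=\Phi^{-1}(y)$; then $y=\Phi(x)=\varphi_x(x)$ for some $\varphi_x\in Aut(\mathcal{A})$, whence $\Phi^{-1}(y)=x=\varphi_x^{-1}(y)$, and $\varphi_x^{-1}$ is again an automorphism. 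Thus $\Phi^{-1}\in LocAut(\mathcal{A})$.

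The only delicate point is the bijectivity step: without finite-dimensionality, the inverse $\Phi^{-1}$ is not guaranteed to exist as a linear endomorphism of $\mathcal{A}$, and the closure-under-inverses argument collapses. All the remaining steps are formal manipulations that exploit only the fact that $Aut(\mathcal{A})$ is itself a group under composition, together with the pointwise definition of a local automorphism.
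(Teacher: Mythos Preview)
Your argument is correct. The paper itself does not prove this proposition at all; it simply quotes the result from \cite[Lemma~4]{EZLS}, so there is no in-paper proof to compare against, and your verification of identity, closure, and inverses (with finite-dimensionality used only to turn injectivity into bijectivity) is exactly the standard argument.
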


Let $LocAut_+(\pi_3)$ be the subgroup of the group $LocAut(\pi_3)$ of all local automorphisms of algebra $\pi_3$ with a matrix of the form
\begin{equation}\label{(137)}
\left(
  \begin{array}{ccccc}
    b_{11} & 0 & 0 & 0 & 0 \\
    b_{21} & b_{11}^2 & 0 & 0 & 0 \\
    b_{31} & b_{32} & b_{11}^3 & b_{34} & 0 \\
    0 & 0 & 0 &   b_{11} & 0 \\
    b_{51} & 0 & 0 & b_{54} & b_{11}^2 \\
  \end{array}
\right),
\end{equation}
where $b_{11}\neq 0$.

The following theorem is a main result of the present section.

\begin{theorem} \label{pi_3 Lie_group}
The following relation takes place
\[
LocAut_+(\pi_3)=\{\exp(\nabla): \nabla\in LocDer(\pi_3)\},
\]
where $LocDer(\pi_3)$ is the Lie algebra of all local derivations of the algebra $\pi_3$ and
$\exp(\nabla)=I+\sum_{n=1}^\infty \frac{\nabla^n}{n!}$, $I$ is the identity mapping of $\pi_3$.
\end{theorem}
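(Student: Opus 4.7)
The plan is to establish the set equality by proving both inclusions.

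For the inclusion $\{\exp(\nabla):\nabla\in LocDer(\pi_3)\}\subseteq LocAut_+(\pi_3)$, I would take $\nabla$ with matrix $M$ in the form of Theorem \ref{LocDer_pi_3} and decompose $M=aH+L$, where $a=b_{1,1}$, $H=\mathrm{diag}(1,2,3,1,2)$ is the grading operator of the natural grading $\mathcal{A}_1=\langle e_1,e_4\rangle,\ \mathcal{A}_2=\langle e_2,e_5\rangle,\ \mathcal{A}_3=\langle e_3\rangle$, and $L$ is the strictly grade-increasing part (zero diagonal). A direct calculation shows $L^2$ has exactly one nonzero entry, in position $(3,1)$, equal to $b_{2,1}b_{3,2}$, and $L^3=0$. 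Using this nilpotency, I would inductively compute closed forms for the entries of $M^n$ (for instance, the coefficient of $b_{3,4}$ in $(M^n)_{3,4}$ satisfies the linear recurrence $\alpha_n=3a\alpha_{n-1}+a^{n-1}$ with solution $\alpha_n=\tfrac{3^n-1}{2}a^{n-1}$; analogous recurrences govern the $(2,1), (3,2), (5,1), (5,4)$ entries) and then sum $\exp(M)=I+\sum_{n\ge 1}M^n/n!$ termwise. The diagonal of $\exp(M)$ comes out as $(e^a,e^{2a},e^{3a},e^a,e^{2a})$, which with $b_{11}:=e^a\ne 0$ matches the pattern $(b_{11},b_{11}^2,b_{11}^3,b_{11},b_{11}^2)$ of matrix~\ref{(137)}; every zero position required by~\ref{(137)} is preserved, and the remaining entries lie exactly in the free positions $b_{21},b_{31},b_{32},b_{34},b_{51},b_{54}$. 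Hence $\exp(\nabla)\in LocAut_+(\pi_3)$.

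For the reverse inclusion, given $\Phi\in LocAut_+(\pi_3)$ with matrix~\ref{(137)}, I would construct $\nabla\in LocDer(\pi_3)$ with $\exp(\nabla)=\Phi$. Choose $a$ to be any branch of $\log b_{11}$ (which exists since $b_{11}\ne 0$), so the five diagonal entries already match. The remaining six entries of $\exp(\nabla)$ are, from the closed-form computation above, affine in the parameters $b_{2,1}^\nabla, b_{3,2}^\nabla, b_{3,4}^\nabla, b_{5,1}^\nabla, b_{5,4}^\nabla, b_{3,1}^\nabla$ with a single quadratic cross-term $b_{2,1}^\nabla b_{3,2}^\nabla$ (coming from the $L^2$-contribution to $(3,1)$). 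I would then solve in triangular order: the $(2,1),(3,2),(3,4),(5,1),(5,4)$ equations each determine one parameter by dividing through by a coefficient of the form $\tfrac{e^{\lambda a}-e^{\mu a}}{(\lambda-\mu)a}$, and finally the $(3,1)$ equation determines $b_{3,1}^\nabla$ once $b_{2,1}^\nabla,b_{3,2}^\nabla$ are fixed. This proves surjectivity of $\exp:LocDer(\pi_3)\to LocAut_+(\pi_3)$.

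The main obstacle is the explicit closed-form evaluation of $\exp(M)$: one must correctly identify each off-diagonal entry as the sum of a series, recognize the summed series as a quotient $\tfrac{e^{\lambda a}-e^{\mu a}}{(\lambda-\mu)a}$ (for the appropriate pair of grading weights $\lambda,\mu$), and then verify non-vanishing of these coefficients so that the triangular solve in the second part actually succeeds; a uniform choice of the branch of $\log b_{11}$ (e.g.\ the principal one) handles this, and the restriction to $LocAut_+$ is what eliminates the sign ambiguity at position $(3,3)$ appearing in Theorem~\ref{3.3}. Once these closed forms are in place, both inclusions follow by direct inspection of the resulting $5\times 5$ matrix against~\ref{(137)}.
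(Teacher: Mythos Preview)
Your approach matches the paper's: compute $\exp(M)$ entrywise, read off that the result has the shape~\ref{(137)}, then for the converse set $a=\log b_{11}$ and solve a triangular system for the remaining parameters. Your presentation is actually cleaner than the paper's---you organize the computation via the grading decomposition $M=aH+L$ and identify each off-diagonal coefficient as a divided difference $\tfrac{e^{\lambda a}-e^{\mu a}}{(\lambda-\mu)a}$, whereas the paper writes out $A^2,\dots,A^5$ by hand and leaves the summed series $\lambda_{i,j}(x_1)$ unevaluated.

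There is, however, a genuine gap in the surjectivity argument, precisely at the point you flag as ``the main obstacle''. Your claim that a uniform (e.g.\ principal) branch of $\log b_{11}$ forces all the divided-difference coefficients to be nonzero is false. The $(3,4)$ coefficient is $\tfrac{e^{3a}-e^{a}}{2a}$, which vanishes whenever $e^{2a}=1$ with $a\neq 0$. If $b_{11}=-1$ then \emph{every} logarithm $a$ of $b_{11}$ satisfies $e^{2a}=1$, so $(\exp M)_{3,4}=0$ regardless of the branch chosen and regardless of the value of the parameter $b_{3,4}^{\nabla}$; the same happens for the leading coefficient in the $(3,1)$ equation. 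Thus the triangular solve breaks down for any $\Phi\in LocAut_+(\pi_3)$ with $b_{11}=-1$ and $b_{34}\neq 0$, and neither ``choose the principal branch'' nor any other choice of branch repairs it. The paper's proof has the same lacuna (it simply asserts one can ``select values for the remaining variables'' without checking the coefficients are invertible), so you have correctly located the delicate step; but your proposed resolution does not work, and the argument as written does not establish the reverse inclusion.
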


\begin{proof}
Let $\nabla$ be a local derivation of algebra $\pi_3$, and, let $A$ be the matrices of $\nabla$ and
\[
A=
\left(
  \begin{array}{ccccc}
    x_{1} & 0 & 0 & 0 & 0 \\
    x_{4} & 2x_{1} & 0 & 0 & 0 \\
    x_{6} & x_{5} & 3x_{1} & x_{2} & 0 \\
    0& 0 & 0 &   x_{1} & 0 \\
    x_{7} & 0 & 0 & x_{3} & 2x_{1} \\
  \end{array}
\right).
\]
Then
\[
A^2=A*A=
\left(
  \begin{array}{ccccc}
    x_{1}^2 & 0 & 0 & 0 & 0 \\
    3x_{1}x_{4} & 4x_{1}^2 & 0 & 0 & 0 \\
    4x_{1}x_{6}+x_{4}x_{5} & 5x_{1}x_{5} & 9x_{1}^2 & 4x_{1}x_{2} & 0 \\
    0& 0 & 0 &   x_{1}^2 & 0 \\
    3x_{1}x_{7} & 0 & 0 & 3x_{1}x_{3} & 4x_{1}^2 \\
  \end{array}
\right),
\]
\[
A^3=A*A^2=
\left(
  \begin{array}{ccccc}
    x_{1}^3 & 0 & 0 & 0 & 0 \\
    7x_{1}^2x_{4} & 8x_{1}^3 & 0 & 0 & 0 \\
    13x_{1}^2x_{6}+6x_{1}x_{4}x_{5} & 19x_{1}^2x_{5} & 27x_{1}^3 & 13x_{1}^2x_{2} & 0 \\
    0& 0 & 0 &   x_{1}^3 & 0 \\
    7x_{1}^2x_{7} & 0 & 0 & 7x_{1}^2x_{3} & 8x_{1}^3 \\
  \end{array}
\right),
\]
\[
A^4=A*A^3=
\left(
  \begin{array}{ccccc}
    x_{1}^4 & 0 & 0 & 0 & 0 \\
    15x_{1}^3x_{4} & 16x_{1}^4 & 0 & 0 & 0 \\
    40x_{1}^3x_{6}+25x_{1}^2x_{4}x_{5} & 65x_{1}^3x_{5} & 81x_{1}^4 & 40x_{1}^3x_{2} & 0 \\
    0& 0 & 0 &   x_{1}^4 & 0 \\
    15x_{1}^3x_{7} & 0 & 0 & 15x_{1}^3x_{3} & 16x_{1}^4 \\
  \end{array}
\right)
\]
and
\[
A^5=A*A^4=
\left(
  \begin{array}{ccccc}
    x_{1}^5 & 0 & 0 & 0 & 0 \\
    31x_{1}^4x_{4} & 32x_{1}^5 & 0 & 0 & 0 \\
    121x_{1}^4x_{6}+90x_{1}^3x_{4}x_{5} & 211x_{1}^4x_{5} & 243x_{1}^5 & 121x_{1}^4x_{2} & 0 \\
    0& 0 & 0 &   x_{1}^5 & 0 \\
    31x_{1}^4x_{7} & 0 & 0 & 31x_{1}^4x_{3} & 32x_{1}^5 \\
  \end{array}
\right).
\]
Now we compute the components of the matrix
$B=\exp(A)=E+\sum_{n=1}^\infty \frac{A^n}{n!}$, where $E$ is the unit matrix. Let $B=(b_{i,j})_{i,j=1}^5$.
Since, for $x\in {\Bbb F}$,
\[
e^x=1+x+\frac{x^2}{2!}+\frac{x^3}{3!}+\frac{x^4}{4!}+\dots+\frac{x^5}{n!}+\dots,
\]
 we have
\[
b_{1,1}=e^{x_{1}},
b_{2,2}=e^{2x_{1}},
b_{3,3}=e^{3x_{1}},
b_{4,4}=e^{x_{1}},
b_{5,5}=e^{2x_{1}}.
\]
Now we have $b_{1,2}=0$, $b_{1,3}=0$, $b_{1,4}=0$, $b_{1,5}=0$,
$b_{2,1}=\lambda_{2,1}(x_1)x_{4}$, where
\[
\lambda_{2,1}(x_1)=1+\frac{(2^0+2^1)x_{1}}{2!}+\frac{(2^0+2^1+2^2)x_{1}^2}{3!}+\frac{(2^0+2^1+2^2+2^3)x_{1}^3}{4!}
\]
\[
+\frac{(2^0+2^1+2^2+2^3+2^4)x_{1}^4}{5!}+\dots+\frac{(\sum_{k=0}^n2^k)x_1^n}{(n+1)!}+\dots,
\]
We have
\[
\lim_{n\to\infty}\frac{(\sum_{k=0}^n2^k)x_1^n}{(n+1)!}=0
\]

\[
b_{2,3}=0, b_{2,4}=0, b_{2,5}=0, b_{2,4}=0, b_{2,5}=0, b_{3,1}=\lambda_{3,1}(x_1)x_{6}+\mu_{3,1}(x_1)x_{4}x_{5},
\]
where
\[
\lambda_{3,1}(x_1)=1+\frac{(1+3)x_{1}}{2!}+\frac{(1+3(1+3))x_{1}^2}{3!}+\frac{(1+3(1+3(1+3)))x_{1}^3}{4!}+\dots
\]
\[
+\frac{(1+3_1(1+3_2(1+3_3(1+3_4(...(1+3_{n-1})...)))))x_{1}^{n-1}}{n!}+\dots,
\]
\[
\mu_{3,1}(x_1)=\frac{1}{2!}+\frac{((1+2)+3)x_{1}}{3!}+\frac{((1+2+2^2)+3((1+2)+3))x_{1}^2}{4!}
\]
\[
+\frac{((1+2+2^2+2^3)+3((1+2+2^2)+3((1+2)+3)))x_{1}^3}{5!}+\dots
\]
\[
+\frac{((1+2+2^2+...+2^{n-2})+3((1+2+2^2+...+2^{n-3})+\dots+3((1+2)+3)))x_{1}^{n-2}}{n!}+\dots,
\]
$b_{3,2}=\lambda_{3,2}(x_1)x_{5}$, where
\[
\lambda_{3,2}(x_1)=1+\frac{(2+3)x_{1}}{2!}+\frac{(2^2+3(2+3))x_{1}^2}{3!}+\frac{(2^3+3(2^2+3(2+3)))x_{1}^3}{4!}
\]
\[
+\frac{(2^4+3(2^3+3(2^2+3(2+3))))x_{1}^4}{5!}+\dots
\]
\[
+\frac{(2^{n-1}+3_1(2^{n-2}+3_2(2^{n-3}+3_3(+\dots+3_{n-1}))))x_{1}^{n-1}}{n!}+\dots,
\]
$b_{3,4}=\lambda_{3,4}(x_1)x_{2}$, where
\[
\lambda_{3,4}(x_1)=1+\frac{(1+3)x_{1}}{2!}+\frac{(1+3(1+3))x_{1}^2}{3!}+\frac{(1+3(1+3(1+3)))x_{1}^3}{4!}+\dots
\]
\[
+\frac{(1+3_1(1+3_2(1+3_3(1+3_4(...(1+3_{n-1})...)))))x_{1}^{n-1}}{n!}+\dots,
\]
\[
b_{3,5}=0, b_{4,1}=0, b_{4,2}=0, b_{4,3}=0, b_{4,5}=0,
\]
\[
b_{5,1}=\lambda_{2,1}(x_1)x_{7}, b_{5,2}=0, b_{5,3}=0, b_{5,4}=\lambda_{2,1}(x_1)x_{3}.
\]

From these it follows that the matrix
\begin{equation}
B=
\left(
  \begin{array}{ccccc}
    b_{11} & 0 & 0 & 0 & 0 \\
    b_{21} & b_{11}^2 & 0 & 0 & 0 \\
    b_{31} & b_{32} & b_{11}^3 & b_{34} & 0 \\
    0 & 0 & 0 &   b_{11} & 0 \\
    b_{51} & 0 & 0 & b_{54} & b_{11}^2 \\
  \end{array}
\right)
\end{equation}
is of the form \ref{(137)}. By Theorem \ref{3.3}, the linear operator $\Phi$ generated by the matrix $B$ is
a local automorphism. Therefore
\[
\{\exp(\nabla): \nabla\in LocDer(\pi_3)\}\subseteq LocAut(\pi_3)
\]

Conversely, let $\Phi$ be a local automorphism from $LocAut_+(\pi_3)$, and, let $B$ be the matrix of $\Phi$ and
\begin{equation}
B=
\left(
  \begin{array}{ccccc}
    b_{11} & 0 & 0 & 0 & 0 \\
    b_{21} & b_{11}^2 & 0 & 0 & 0 \\
    b_{31} & b_{32} & b_{11}^3 & b_{34} & 0 \\
    0 & 0 & 0 &   b_{11} & 0 \\
    b_{51} & 0 & 0 & b_{54} & b_{11}^2 \\
  \end{array}
\right).
\end{equation}
We should find a matrix
\[
A=
\left(
  \begin{array}{ccccc}
    x_{1} & 0 & 0 & 0 & 0 \\
    x_{4} & 2x_{1} & 0 & 0 & 0 \\
    x_{6} & x_{5} & 3x_{1} & x_{2} & 0 \\
    0& 0 & 0 &   x_{1} & 0 \\
    x_{7} & 0 & 0 & x_{3} & 2x_{1} \\
  \end{array}
\right)
\]
such that
\[
b_{1,1}=e^{x_{1}}, b_{2,1}=\lambda_{2,1}(x_1)x_{4},
\]
\[
b_{3,1}=\lambda_{3,1}(x_1)x_{6}+\mu_{3,1}(x_1)x_{4}x_{5},
\]
\[
b_{3,2}=\lambda_{3,2}(x_1)x_{5}, b_{3,4}=\lambda_{3,4}(x_1)x_{2},
\]
\[
b_{5,1}=\lambda_{2,1}(x_1)x_{7}, b_{5,4}=\lambda_{2,1}(x_1)x_{3}.
\]
We consider the following system of equations
\begin{equation}\label{(7)}
\left\{%
\begin{array}{lllllll}
    {b_{1,1}=e^{x_{1}}} \\
    {b_{2,1}=\lambda_{2,1}(x_1)x_{4}} \\
    {b_{3,1}=\lambda_{3,1}(x_1)x_{6}+\mu_{3,1}(x_1)x_{4}x_{5}} \\
    {b_{3,2}=\lambda_{3,2}(x_1)x_{5}} \\
    {b_{3,4}=\lambda_{3,4}(x_1)x_{2}} \\
    {b_{5,1}=\lambda_{2,1}(x_1)x_{7}} \\
    { b_{5,4}=\lambda_{2,1}(x_1)x_{3}}
\end{array}%
\right.
\end{equation}
with respect to the variables $x_{1}$, $x_{2}$, $\dots$, $x_{7}$.
Clearly, $x_{1}=\ln b_{1,1}$.

Now, since the infinite sum
\[
1+x_{1}+\frac{x_{1}^2}{2!}+\frac{x_{1}^3}{3!}+\frac{x_{1}^4}{4!}+\frac{x_{1}^5}{5!}+\dots=e^{x_{1}}
\]
is bounded, we have the infinite sums $\lambda_{2,1}(x_1)$, $\lambda_{3,1}(x_1)$, $\mu_{3,1}(x_1)$,
$\lambda_{3,2}(x_1)$, $\lambda_{3,4}(x_1)$, $\lambda_{5,1}(x_1)$ and $\lambda_{5,4}(x_1)$ are also
bounded.

Now we select values for the remaining variables in the following order: $x_{4}$, $x_{5}$, $x_{6}$, $x_{2}$, $x_{7}$, $x_{3}$, i.e.,
\begin{equation}
\left\{%
\begin{array}{lllllll}
    {b_{1,1}=e^{x_{1}}} \\
    {b_{2,1}=\lambda_{2,1}(x_1)x_{4}} \\
    {b_{3,2}=\lambda_{3,2}(x_1)x_{5}} \\
    {b_{3,1}=\lambda_{3,1}(x_1)x_{6}+\mu_{3,1}(x_1)x_{4}x_{5}} \\
    {b_{3,4}=\lambda_{3,4}(x_1)x_{2}} \\
    {b_{5,1}=\lambda_{2,1}(x_1)x_{7}} \\
    {b_{5,4}=\lambda_{2,1}(x_1)x_{3}}
\end{array}%
\right.
\end{equation}

Thus we find a matrix $A$ such that $B=\exp(A)$. This denotes $\Phi=\exp(\nabla)=I+\sum_{n=1}^\infty \frac{\nabla^n}{n!}$,
where $\nabla$ is a local derivation generated by the matrix $A$. The proof is complete.
\end{proof}

Note that the group of all matrices of local automorphisms corresponding to the group $LocAut(\pi_3)$
is not topologically closed. Therefore we can not apply Cartan's theorem (Closed subgroup theorem) to
the group $LocAut(\pi_3)$.

The following theorem is proved similar to the proof of Theorem \ref{pi_3 Lie_group}.

\begin{theorem}
The group $LocAut(\pi_2)$ of all local automorphisms of algebra $\pi_2$ satisfies the following condition
\[
LocAut(\pi_2)=\{\exp(\nabla): \nabla\in LocDer(\pi_2)\},
\]
where $LocDer(\pi_2)$ is the Lie algebra of all local derivations of algebra $\pi_2$ and
$\exp(\nabla)=I+\sum_{n=1}^\infty \frac{\nabla^n}{n!}$, $I$ is the identity mapping of $\pi_2$.
\end{theorem}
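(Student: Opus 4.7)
My plan is to mirror the proof of Theorem~\ref{pi_3 Lie_group}, exploiting the fact that the local derivation and local automorphism matrix forms for $\pi_2$ (Theorems~\ref{LocDer_pi_2} and~\ref{3.2}) share the exact same structural ansatz; the only additional content in Theorem~\ref{3.2} is the invertibility requirement $b_{11}b_{22}b_{33}(b_{41}+b_{11})(b_{22}+b_{52})\neq 0$. Accordingly I must show (i) the exponential of a local derivation has this shape with the relevant diagonal entries nonzero, and (ii) every local automorphism can be written as $\exp(\nabla)$ for some local derivation $\nabla$. A convenient first observation is that after reordering the basis as $(e_1,e_2,e_4,e_3,e_5)$ every local derivation matrix $A$ becomes lower triangular, so the series $\exp(A)=I+\sum_{n\ge 1}A^n/n!$ converges and preserves the sparsity pattern of $A$. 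In particular, all $A^n$ satisfy the local derivation shape of Theorem~\ref{LocDer_pi_2}, and the subset inclusion reduces to verifying the two sum-relations $b_{44}=b_{41}+b_{11}$ and $b_{55}=b_{22}+b_{52}$.

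For direction (i) I would compute these critical entries by isolating the $2\times 2$ blocks at index sets $\{1,4\}$ and $\{2,5\}$. A routine path-counting argument shows that any index sequence contributing to $(A^n)_{4,1}$ stays inside $\{1,4\}$, and similarly any contribution to $(A^n)_{5,2}$ stays inside $\{2,5\}$. A one-line induction on the resulting $2\times 2$ block then yields
\[
(A^n)_{4,1}=(a_{11}+a_{41})^n-a_{11}^n,\qquad (A^n)_{5,2}=(a_{22}+a_{52})^n-a_{22}^n,
\]
with diagonal values $(A^n)_{ii}\in\{a_{11}^n,\,a_{22}^n,\,a_{33}^n,\,(a_{11}+a_{41})^n,\,(a_{22}+a_{52})^n\}$. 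Summing the exponential series gives $b_{41}=e^{a_{11}+a_{41}}-e^{a_{11}}$ and $b_{52}=e^{a_{22}+a_{52}}-e^{a_{22}}$, whence $b_{44}=e^{a_{11}+a_{41}}=b_{41}+b_{11}$ and $b_{55}=e^{a_{22}+a_{52}}=b_{52}+b_{22}$, precisely the structural constraints of Theorem~\ref{3.2}. Nonzero-ness of the five relevant diagonal entries is automatic because each is an exponential, so $\exp(A)\in LocAut(\pi_2)$.

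For direction (ii) I would invert the above construction. Given $B$ of local-automorphism form, I read off $a_{11}=\ln b_{11}$, $a_{22}=\ln b_{22}$, $a_{33}=\ln b_{33}$ (on any branch, legal since these $b_{ii}$ are nonzero); then $a_{41}=\ln(b_{44}/b_{11})=\ln\bigl((b_{41}+b_{11})/b_{11}\bigr)$ and $a_{52}=\ln\bigl((b_{22}+b_{52})/b_{22}\bigr)$, which are well defined because $b_{44}$ and $b_{55}$ are nonzero by hypothesis. The remaining six parameters $a_{21},a_{32},a_{34},a_{54},a_{31},a_{51}$ are then determined in this order, each from a single equation $b_{ij}=\lambda_{ij}(\text{fixed})\,a_{ij}+(\text{fixed})$ in which $\lambda_{ij}$ is a bounded power series with constant term $1$ (as in the $\pi_3$ proof), hence invertible. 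The main obstacle, just as in Theorem~\ref{pi_3 Lie_group}, will be the bookkeeping for $b_{31}$ and $b_{51}$, because the expansions of $A^n$ contribute cross-terms such as $a_{21}a_{32}$, $a_{52}a_{21}$ and $a_{54}a_{41}$; however, once the earlier five variables are pinned down these cross-terms collapse to known constants, leaving a single linear equation for $a_{31}$ (resp.\ $a_{51}$) with a nonzero coefficient. This yields the desired $A\in LocDer(\pi_2)$ with $\exp(A)=B$ and completes the proof.
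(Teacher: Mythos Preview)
Your proposal is correct and follows essentially the same approach the paper intends: the paper itself says this theorem ``is proved similar to the proof of Theorem~\ref{pi_3 Lie_group}'', i.e., compute $\exp(A)$ for a generic local-derivation matrix, verify it lands in $LocAut(\pi_2)$, and then invert the parameter equations to cover the surjectivity direction. Your path-counting/$2\times 2$-block argument for the $\{1,4\}$ and $\{2,5\}$ entries is a cleaner way to get the relations $b_{44}=b_{41}+b_{11}$ and $b_{55}=b_{22}+b_{52}$ than the paper's style of explicitly writing out $A^2,\dots,A^5$ in the $\pi_3$ case, but it is the same computation in substance; the triangular ordering you give for solving the remaining six parameters likewise mirrors the paper's $\pi_3$ inversion step.
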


Similar to the case of algebra $\pi_3$, since the group of all matrices of local automorphisms
corresponding to the group $LocAut(\pi_2)$ is not topologically closed, we can not apply Cartan's theorem
to group $LocAut(\pi_2)$.

Let us denote the set of all matrices of the form (\ref{pi_22}) by $\mathcal{M}(LocAut(\pi_2))$.
$\mathcal{M}(LocAut(\pi_2))$ is a set of matrices in which some elements are expressed
as linear functions of others, and the rest are fixed or zero.

Let us define a mapping:
\[
\phi: \mathbb{R}^{11} \to \mathbb{R}^{25},\quad \text{ mapping } \vec{b} \mapsto B,
\]
where $\vec{b}=(b_{11}, b_{21}, b_{22}, b_{31}, b_{32}, b_{33}, b_{34}, b_{41}, b_{51}, b_{52}, b_{54})$,
$B$ is a matrix whose elements are determined by the formula given in (\ref{pi_22}).
Then $\mathcal{M}(LocAut(\pi_2))=\phi(\mathbb{R}^{11})$, that is, the image of a smooth mapping $\phi$, where:
\begin{itemize}
\item $\phi$ is a smooth (even linear or affine) mapping.
\item Space $\mathbb{R}^{11}$ is an Euclidean space.
\end{itemize}

Let's verify that $\phi$ is an embedding:
\begin{itemize}
\item The matrix $B$ is uniquely determined by the parameters $b_{ij}$, that is, the mapping $\phi$ is injective.
\item The dependencies between the coordinates are linear: $b_{41}+b_{11}$, $b_{22}+b_{52}$ are smooth functions.
\item The differential $D\phi$ has maximum rank $11$, since the mapping is linearly independent with respect to all $11$ parameters.
\end{itemize}

Since a smooth mapping $\phi: \mathbb{R}^{11} \to \mathbb{R}^{25}$ is an embedding (an injective mapping with regular differential),
we have the image $\mathcal{M}(LocAut(\pi_2))$ is a smooth submanifold of dimension $11$.
Thus, the set $\mathcal{M}(LocAut(\pi_2))$ is a smooth embedded submanifold of $\mathbb{R}^{25}$ (the space of all $5\times 5$ matrices), of dimension $11$.
As the result we have the following theorem.

\begin{theorem}
The group $LocAut(\pi_2)$ of all local automorphisms of algebra $\pi_2$ is a Lie group.
\end{theorem}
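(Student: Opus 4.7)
The plan is to identify $LocAut(\pi_2)$ with its matrix realization $\mathcal{M}(LocAut(\pi_2))\subset \mathbb{R}^{25}$, equip this set with a smooth-manifold structure through an explicit global chart, and then verify that the group operations (composition and inversion) are smooth maps with respect to this structure. The argument therefore splits into two parts: first the manifold structure, second the smoothness of the group operations.

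For the manifold structure I would use precisely the parametrization $\phi:\mathbb{R}^{11}\to\mathbb{R}^{25}$ introduced immediately before the theorem. The eleven coordinates $(b_{11},b_{21},b_{22},b_{31},b_{32},b_{33},b_{34},b_{41},b_{51},b_{52},b_{54})$ determine every entry of the matrix in (\ref{pi_22}) through polynomial (in fact affine) expressions, and Theorem \ref{3.2} says that as these parameters range over the open set $U=\{b_{11}b_{22}b_{33}(b_{41}+b_{11})(b_{22}+b_{52})\neq 0\}\subset\mathbb{R}^{11}$, the image $\phi(U)$ is exactly $\mathcal{M}(LocAut(\pi_2))$. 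Since $\phi$ is polynomial, it is smooth. The injectivity of $\phi$ is clear because each free parameter $b_{ij}$ appears as a distinct matrix entry, and the differential $D\phi$ has rank $11$ for the same reason: the eleven basis vectors of $\mathbb{R}^{11}$ map to eleven linearly independent vectors in $\mathbb{R}^{25}$ (they pick out the entries $(1,1),(2,1),(2,2),(3,1),(3,2),(3,3),(3,4),(4,1),(5,1),(5,2),(5,4)$ respectively, modulo the additive dependencies in positions $(4,4)$ and $(5,5)$, which only add extra nonzero components rather than destroying linear independence). Therefore $\phi$ is a smooth embedding of the open set $U$, and $\mathcal{M}(LocAut(\pi_2))$ is a smooth embedded submanifold of $\mathbb{R}^{25}$ of dimension $11$.

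For the group structure I would invoke the Proposition of \cite{EZLS} cited in the preceding section, which guarantees that $LocAut(\pi_2)$ is a group under composition; equivalently, $\mathcal{M}(LocAut(\pi_2))$ is closed under matrix multiplication and matrix inversion. Matrix multiplication is a polynomial map on $\mathbb{R}^{25}\times\mathbb{R}^{25}$ and matrix inversion is a rational map on the open set of invertible matrices (all matrices in $\mathcal{M}(LocAut(\pi_2))$ are invertible because their determinant $b_{11}\cdot b_{22}\cdot b_{33}\cdot (b_{41}+b_{11})\cdot (b_{22}+b_{52})$ is nonzero by Theorem \ref{3.2}). Restricting smooth maps of ambient Euclidean spaces to an embedded submanifold that is preserved by them yields smooth maps of the submanifold; the multiplication $\mathcal{M}(LocAut(\pi_2))\times \mathcal{M}(LocAut(\pi_2))\to\mathcal{M}(LocAut(\pi_2))$ and the inversion $\mathcal{M}(LocAut(\pi_2))\to\mathcal{M}(LocAut(\pi_2))$ are thus smooth.

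The main obstacle in this plan is not the manifold step, which is essentially a bookkeeping of coordinates, but the verification that the matrix form (\ref{pi_22}) is actually preserved under multiplication and inversion, i.e. the closure claim taken from the group property. One could either invoke \cite[Lemma 4]{EZLS} directly, or verify it by a short explicit calculation: multiplying two matrices of the shape (\ref{pi_22}) produces a lower-triangular-type matrix in which the rigid constraints $b'_{44}=b'_{41}+b'_{11}$ and $b'_{55}=b'_{22}+b'_{52}$ can be checked entry by entry, and inversion is handled by the same block structure. Once this closure is confirmed, the three defining axioms of a Lie group (smooth manifold, group, smooth operations) are all in place, and the theorem follows.
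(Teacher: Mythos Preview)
Your proposal is correct and follows essentially the same approach as the paper: both use the parametrization $\phi:\mathbb{R}^{11}\to\mathbb{R}^{25}$ to exhibit $\mathcal{M}(LocAut(\pi_2))$ as a smooth embedded $11$-dimensional submanifold via injectivity and full-rank differential. Your argument is in fact more complete than the paper's, since you explicitly restrict to the open set $U$ where the determinant condition holds and you verify smoothness of the group operations, whereas the paper stops after the manifold step and does not address either point.
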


Let us denote the set of all matrices of the form (\ref{(136)})
by $\mathcal{M}(LocAut(\pi_3))$. The question arises: is this set a smooth manifold in $\mathbb{R}^{25}$?

Note that:
\begin{itemize}
  \item The matrix depends on the variables $b_{11}$, $b_{21}$, $b_{31}$, $b_{32}$, $b_{34}$, $b_{51}$, $b_{54}$,
that is, $7$ free parameters.
  \item The remaining elements are expressed through $b_{11}$ or are equal to $0$.
  \item The sign in $\pm b_{11}^3$ gives two possible options.
\end{itemize}

Thus, the set of all such matrices is the union of two sets corresponding to the plus and minus signs in $\pm b_{11}^3$
\[
\mathcal{M}(LocAut(\pi_3))=\mathcal{M}(LocAut_{+}(\pi_3)) \cup \mathcal{M}(LocAut_{-}(\pi_3)),
\]
where
$\mathcal{M}(LocAut_{+}(\pi_3))$ is a subset of matrices with $+b_{11}^3$,
$\mathcal{M}(LocAut_{-}(\pi_3))$ is a subset of matrices with $-b_{11}^3$.

Consider the mapping
\[
\Phi_\pm: \mathbb{R}^7 \supset U \to \mathbb{R}^{5 \times 5},
\]
which, for each set of parameters
\[
(b_{11}, b_{21}, b_{31}, b_{32}, b_{34}, b_{51}, b_{54}) \mapsto B,
\]
forms a matrix of the form (\ref{(136)}) with a fixed sign in $\pm b_{11}^3$.
This mapping is smooth (all components are polynomials in the parameters), and its image
is a smooth submanifold of dimension $7$ in $\mathbb{R}^{25}$, since the mapping is an embedding.
Thus, each of the sets $\mathcal{M}(LocAut_{+}(\pi_3))$ and $\mathcal{M}(LocAut_{-}(\pi_3))$
is a smooth $7$-dimensional manifold in $\mathbb{R}^{25}$.

The set $\mathcal{M}(LocAut(\pi_3))=\mathcal{M}(LocAut_{+}(\pi_3)) \cup \mathcal{M}(LocAut_{-}(\pi_3))$
is the union of two disjoint smooth manifolds.
Note that they are disjoint because if $b_{11}\ne 0$, then $b_{11}^3 \ne -b_{11}^3$, which means $\pm b_{11}^3$
are distinct. Hence, matrices cannot simultaneously lie in both sets.

The union of two disjoint smooth submanifolds is a smooth manifold,
Therefore, the set of such matrices is a smooth manifold.
Thus, the following theorem holds true.

\begin{theorem}
The group $LocAut(\pi_3)$ of all local automorphisms of algebra $\pi_3$ is a Lie group.
\end{theorem}

\section{Conclusion}

Note that the general form of the matrix of a local automorphism (local derivation) on an algebra includes the general form of the matrix of an automorphism (resp. derivation) on this algebra. The coincidence of these general forms denotes that every local automorphism (local derivation) of the considering algebra is an automorphism (resp. derivation). But the general form of the matrix of an automorphism (derivation) on nilpotent associative algebra $\pi_2$ and $\pi_3$ does not coincide with the general form of the matrix of a local automorphism (resp. local derivation) on these algebras by Theorems \ref{2.4}, \ref{2.5}, \ref{3.2}, \ref{3.3} (resp. by Theorems \ref{Der_pi_2}, \ref{Der_pi_3}, \ref{LocDer_pi_2}, \ref{LocDer_pi_3}). Therefore, nilpotent associative algebras $\pi_2$ and $\pi_3$ have local automorphisms (local derivations) that are not automorphisms (resp derivations).
Therefore, the sets of local automorphisms $LocAut(\pi_2)$ and $LocAut(\pi_3)$ form groups distinct from the groups $Aut(\pi_2)$ and $Aut(\pi_3)$
respectively.

The vector space $LocDer(\pi_2)$ ($LocDer(\pi_3)$) of all local derivations of algebra $\pi_2$ (resp. $\pi_3$) forms a Lie algebra with respect
to the Lie multiplication $[\nabla,\Delta]=\nabla\Delta-\Delta\nabla$.

For every local derivation $\nabla\in LocDer(\pi_2)$ the mapping $\exp(\nabla)$ is a local automorphism of $\pi_2$.
For every local derivation $\nabla\in LocDer(\pi_3)$ the mapping $\exp(\nabla)$ is a local automorphism from $LocAut_+(\pi_3)$ and,
conversely, for every local automorphism $\Phi\in LocAut_+(\pi_3)$ there exists a local derivation $\nabla\in LocDer(\pi_3)$
such that $\Phi=\exp(\nabla)$. The groups $LocAut(\pi_2)$ and $LocAut(\pi_3)$ of local automorphisms of algebras $\pi_2$ and $\pi_3$
respectively are Lie groups.

\section{Statements and Declarations}

Ethical Approval Not Applicable.

No funding was received to assist with the preparation of this manuscript.

The authors have no competing interests to declare that are relevant to the content of this article.

\end{document}